\newtheorem{theorem}{Theorem}
\newtheorem{proposition}{Proposition}
\newtheorem{corollary}{Corollary}
\newtheorem{lemma}{Lemma}
\newtheorem{definition}{Definition}
\newcounter{ex}
\theoremstyle{definition}
\newtheorem{example}[ex]{Example}
\newtheorem*{assumptions}{Assumptions (H)}
\newcommand{\D}{\partial}
\newcommand{\eps}{\varepsilon}
\newcommand{\R}{\mathbb{R}}
\newcommand{\la}{\lambda}
\begin{document}

\title{Maximum Principle and principal eigenvalue in unbounded domains under general boundary conditions}

\author{Samuel Nordmann\thanks{samnordmann@gmail.com ; School of Mathematical Sciences, Tel Aviv University.
}}



\maketitle
\begin{abstract}
This paper investigates the link between the Maximum Principle and the sign of the (generalized) principal eigenvalue for elliptic operators in unbounded domains. Our approach covers the cases of Dirichlet, Neumann, and (indefinite) Robin boundary conditions and treat them in a unified way. For a certain class of elliptic operators (including the class of selfadjoint operators), we establish that the positivity of the principal eigenvalue is a necessary and sufficient condition for the validity of the Maximum Principle. If the principal eigenvalue is zero, no general answer holds; instead, under a natural condition on the domain's size at infinity, we show that the operator satisfies what we call the Critical Maximum Principle. We also address the question of the simplicity of the principal eigenvalue, and a series of counterexamples is proposed to disprove some possible misconceptions. Our main results are new even for the more classical cases of Dirichlet boundary conditions and selfadjoint operators.
\end{abstract}
\paragraph{Keywords:} Elliptic equations, unbounded domains, principal eigenvalue, maximum principle, Dirichlet boundary conditions, Robin boundary conditions,\\

\noindent {\bf MSC 2010:} 35J15, 35B50, 35J25, 35P05.

\section{Introduction}

The Maximum Principle is the cornerstone property of elliptic operators and is related to several fundamental questions such as the existence of positive solutions, stability analysis, Liouville property, spectral analysis, the symmetry of solutions, etc. 
When the domain is bounded, it is classical that the positivity of the \emph{principal eigenvalue} of an elliptic operator is a necessary and sufficient condition for the validity of the Maximum Principle. When the domain is unbounded, one can introduce some notions of \emph{generalized principal eigenvalue} and study whether their signs provide necessary or sufficient conditions for the validity of the Maximum Principle~\cite{Berestycki2015b,Rossi2020}. Important literature is devoted to this question in under Dirichlet boundary conditions (see~\cite{Berestycki2015b,Pinchover2007,Nussbaum1992,Marg2018,Donsker1976}). The study of the Neumann and Robin cases in unbounded domains have been initiated in a recent article of Rossi~\cite{Rossi2020} but appears to be more incomplete.
Yet, these boundary conditions arise in various contexts (for instance, in the literature on reaction-diffusion equations and population dynamics~\cite{Berestyckib,Berestycki2013b}) and therefore the question of the validity of the Maximum Principle in these cases has important implications.


In this paper, we consider a classical generalization of the principal eigenvalue in unbounded domains and show that, under certain conditions, its positivity is a necessary or a sufficient condition for the validity of the Maximum Principle. Our approach covers the cases of non-selfadjoint elliptic operator under Dirichlet, Neumann, and Robin boundary conditions and addresses them all together in a unified way.

\paragraph*{}

Let us introduce our framework and notations. Let $\Omega\subset\R^n$ be a possibly unbounded domain and $\mathcal{L}$ be a linear elliptic operator of the form
\begin{equation}\label{DefinitionEllipticOperator}
\mathcal{L}u(x):=-\nabla\cdot\left(A(x)\cdot\nabla u(x)\right)-B(x)\cdot\nabla u(x)-c(x)u(x),\quad\forall x\in\Omega,
\end{equation}
where, $c:\Omega\to\R$, $B:\Omega\to\in\R^n$, and $A: \Omega \to \R^{n\times n}$.
We associate the operator $\mathcal{L}$ with boundary conditions
\begin{equation}
\mathcal{B}u=0,\qquad\forall x\in\D\Omega,
\end{equation}
which can be either of the Dirichlet type
\begin{equation}
\mathcal{B}u=\mathcal{B}^Du(x):=u(x),\qquad\forall x\in\D\Omega,
\end{equation}
or of the \emph{indefinite} Robin type
\begin{equation}\label{RobinBoundaryConditions}
\mathcal{B}u=\mathcal{B}^\gamma u(x):=\nu(x)\cdot A(x)\nabla u(x)+\gamma(x)u(x),\qquad\forall x\in\D\Omega,
\end{equation}
with $\gamma(\cdot):\D\Omega\to \R$ and where $\nu(x)$ is the outer normal direction of the domain at $x\in\D\Omega$.
The term \emph{indefinite} indicates that we make no sign assumption on $\gamma$.
Neumann boundary conditions are obtained by taking $\gamma\equiv 0$ in~\eqref{RobinBoundaryConditions}.
Our standing assumptions are the following:
\begin{assumptions}\label{hyp:StandingAssumptions}
\mbox{}
\begin{itemize}
\item $A\in C^{1,\alpha}_{loc}\cap L^\infty(\overline{\Omega})$, $B,c\in C_{loc}^{0,\alpha}\cap L^\infty(\overline{\Omega})$, $\gamma\in C_{loc}^{1,\alpha}\cap L^\infty(\D\Omega)$ for some $\alpha\in(0,1)$,
\item the domain $\Omega$ is $C_{loc}^n$, or $C_{loc}^{2,\alpha}$ if $n=2$,
\item the operator $\mathcal{L}$ is elliptic, i.e., $A$ is symmetric positive definite (not necessarily uniformly in $x\in\Omega$), i.e., $A\geq \underline{a}(x)I_n $ for some function $\underline{a}:\Omega\to(0,+\infty)$.
\end{itemize}
\end{assumptions}
\noindent The regularity assumptions are required to apply some technical results from~\cite{Rossi2020}.


\paragraph*{}
We define the notions of subsolutions, supersolutions and the Maximum Principle as follows.
\begin{definition}[sub/supersolutions and Maximum Principle]
\label{Def:subsuper_MP}
\mbox{}
\begin{itemize}
\item We say that $u \in C^{2,\alpha}_{loc}(\overline\Omega)$ is a subsolution (resp. supersolution) of $(\mathcal{L},\mathcal{B})$ in $\Omega$ if and only if
\begin{equation}\label{DefinitionSubSupersolution}
\left\{\begin{aligned}
&\mathcal{L}u\leq 0\  \text{(resp. $\geq0$)}&&\text{in }\Omega,\\
&\mathcal{B}u\leq0\  \text{(resp. $\geq0$)}&&\text{on }\D\Omega.
\end{aligned}\right.
\end{equation}

\item We say that $(\mathcal{L},\mathcal{B})$ satisfies the {Maximum Principle} in $\Omega$ if every subsolution with finite supremum is nonpositive.
\end{itemize}
\end{definition}





When the domain is bounded, Krein-Rutman theorem implies the existence of what is called the principal eigenvalue of $(\mathcal{L},\mathcal{B})$~\cite{Daners2009}, that we denote by $\lambda_1$. This eigenvalue is real and has the lowest real part among all eigenvalues. It is then classical that the sign of $\lambda_1$ is equivalent to the validity of the Maximum Principle. Namely, if the domain $\Omega$ is bounded, then 
\begin{equation}\label{EquivalenceClassiqueDomaineBornes}
\text{$(\mathcal{L},\mathcal{B})$ satisfies the Maximum Principle in $\Omega$ $\qquad \Leftrightarrow\qquad$ $\lambda_1>0$}.
\end{equation}
This result is essentially classical, at least for Dirichlet or positive Robin boundary conditions (see e.g.~\cite{Protter1984,GilbarDavid2015}). When there is no sign hypothesis on $\gamma$, the analysis often gets more difficult, mainly because in this case, the Laplace operator is not positive. However, it is noted by Daners~\cite{Daners2009} that any (indefinite) Robin problem can be re-written as a \emph{positive} Robin problem while preserving the structure of the operator. The results of Daners therefore entail the study of indefinite Robin boundary conditions using classical methods. In particular, they imply that the equivalence~\eqref{EquivalenceClassiqueDomaineBornes} holds in the case of indefinite Robin boundary conditions.


When the domain is unbounded, Krein-Rutman theorem is not applicable, however, one can still extend the definition of the principal eigenvalue and investigate the link between its sign and the validity of the Maximum Principle. The most standard and general definition of the principal eigenvalue in the unbounded setting is as follows.
\begin{definition}[generalized principal eigenvalue]\label{def:genralized_principal_eigenvalue}
We call \emph{generalized principal eigenvalue} of $(\mathcal{L},\mathcal{B})$ the quantity
\begin{equation}\label{DefinitionLambda1Robin}
\lambda_1:=\sup\left\{\lambda\in\R: \text{$(\mathcal{L}-\lambda,\mathcal{B})$ admits a positive supersolution}
 \right\}.
\end{equation}
\end{definition}
This definition coincides with the classical notion of principal eigenvalue given by Krein-Rutman theorem when applicable. In addition, it is knwon that $\lambda_1$ is associated with a positive eigenfunction~\cite{Berestycki2015b,Rossi2020}.
If the operator is self-adjoint, i.e., if $B\equiv0$ in~\eqref{DefinitionEllipticOperator}, the generalized principal eigenvalue defined in~\eqref{DefinitionLambda1Robin} can also be expressed through the Rayleigh-Ritz variational formula, namely,
under Robin boundary conditions
\begin{equation}\label{RayleighFormulaEigen_Robin}
\la_1
=\inf\limits_{\substack{\psi\in H^1(\Omega)\\                                                                                                                                                                                                                                                                                                                                                                                                          \Vert \psi\Vert_{{L}^2}=1}} \int_\Omega \vert \nabla \psi\vert_A^2-c\psi^2+\int_{\D\Omega}\gamma\psi^2,
\end{equation}
where
$
\vert \nabla\psi\vert_A^2:=\nabla\psi\cdot A\nabla\psi,
$
and under Dirichlet boundary conditions
\begin{equation}\label{RayleighFormulaEigen_Dirichlet}
\la_1
=\inf\limits_{\substack{\psi\in H^1_0(\Omega)\\                                                                                                                                                                                                                                                                                                                                                                                                          \Vert \psi\Vert_{{L}^2}=1}} \int_\Omega \vert \nabla \psi\vert_A^2-c\psi^2,
\end{equation}
where $H^1_0(\Omega)$ is the space of $H^1$ functions which vanishes at the boundary $\D\Omega$.

Definition~\eqref{DefinitionLambda1Robin} has been used in many papers to study the validity of the Maximum principle under Dirichlet boundary conditions when the domain is nonsmooth~\cite{Berestycki1994,Pinchover2007a,Pinsky1995} or unbounded~\cite{Berestycki2015b,Rossi2020,Marg2018}, and coincides with previous variational caracterizations~\cite{Agmon1983,Nussbaum1992,Donsker1975,Donsker1976,Holland1978,Pinsky1995}.. Recently, Rossi~\cite{Rossi2020} used definition~\eqref{DefinitionLambda1Robin} for Robin boundary conditions in unbounded domains and laid the groundwork by proving important results, including the technical question of the existence of a positive eigenfunction associated with $\lambda_1$, see \autoref{lem:preliminary} below. Yet, the case of Robin boundary conditions is less understood than the Dirichlet case. We also mention the articles~\cite{Pinchover2002,Pinchover2020} in which an equivalent definition of the principal eigenvalue is considered for very general boundary condition, and also~\cite{Patrizi2008} which studies the Maximum Principle for fully nonlinear elliptic operator with Neumann boundary conditions.

In general, if the domain is unbounded, it is known that~\eqref{EquivalenceClassiqueDomaineBornes} does not hold, i.e., the positivity of the generalized principal eigenvalue $\lambda_1$ is neither a necessary nor a sufficient condition for the validity of the Maximum Principle, see Example~\ref{Ex:nonBoundedDrift} below and~\cite{Berestycki2015b,Rossi2020}. Alternative notions of generalized principal eigenvalues have therefore been proposed to provide, through their signs, such necessary and sufficient conditions.

Nevertheless, the definition of $\lambda_1$ through~\eqref{DefinitionLambda1Robin} is usually considered as the most natural generalization of the principal eigenvalue, firstly, because $\lambda_1$ is associated with an admissible eigenfunction (i.e. the ``$\sup$'' in~\eqref{DefinitionLambda1Robin} is actually a $\max$), secondly, because this definition matches with the Rayleigh-Ritz formula~\eqref{RayleighFormulaEigen_Robin}-\eqref{RayleighFormulaEigen_Dirichlet} in the case of a selfadjoint operator. It is therefore important to investigate the conditions under which the positivity of $\lambda_1$ ensures that the Maximum Principle holds.

\paragraph*{Outline.}
In this paper, we study the links between the sign of $\lambda_1$ and the validity of the Maximum Principle.
Our results are stated and discussed in Section~\ref{sec:Results} and the proofs are given in Section~\ref{sec:Proofs}.

Our first result (\autoref{th:MPEigen}) establishes that, if the elliptic operator satisfies a certain condition~\eqref{AssumptionBoundedDrift} (which is automatically satisfied if the operator is selfadjoint), then the strict sign of the generalized principal eigenvalue is a sufficient and necessary condition for the validity of the Maximum Principle.

Then, we deal with the critical case where the generalized principal eigenvalue is zero. We show (\autoref{th:CriticalMP}) that under an additional condition on the growth of the domain~\eqref{GrowthConditionEigen}, the operator satisfies what we call the \emph{Critical Maximum Principle} (\autoref{def:Critical_MP}). 
From this, we derive a useful necessary and sufficient condition for the validity of the Maximum Principle in the critical case (\autoref{cor:VarphiBounded}) and deduce that no general answer holds.
We also address the question of the simplicity of the principal eigenvalue (\autoref{th:Simplicity}) and show that no general answer holds.

Finally, \autoref{th:EigenMPNOTSelfAdjoint} provides a necessary and sufficient condition for the validity of the Maximum Principle for general elliptic operators that do not satisfy condition~\eqref{AssumptionBoundedDrift}. The sufficient condition involves the sign of an alternative notion of generalized principal eigenvalue.

\section{Statement of the results}\label{sec:Results}

Many of our statements deal with a certain class of elliptic operator whose drift term derives from a bounded potential, namely, we may assume that 
\begin{equation}\label{AssumptionBoundedDrift}
\exists\eta:{\Omega}\to\R\in C^1\cap L^\infty, \qquad \nabla\eta=-A^{-1}\cdot B.
\end{equation}
Before stating our results, let us point out some important cases where assumption~\eqref{AssumptionBoundedDrift} holds.
\begin{itemize}
\item Assumption~\eqref{AssumptionBoundedDrift} is automatically satisfied if the operator is self-adjoint, i.e., if $B=0$ in~\eqref{DefinitionEllipticOperator} (simply take $\eta\equiv 0$).
\item In dimension $n=1$, if $\frac{B}{A}$ has a bounded primitive, then~\eqref{AssumptionBoundedDrift} is satisfied.
\item If $A^{-1}\cdot B$ is constant, then \eqref{AssumptionBoundedDrift} reduces to
\begin{equation}
\sup\limits_{x\in\Omega}\left\vert A^{-1}\cdot B\cdot x \right\vert<+\infty.
\end{equation}
This assumption is not satisfied if $\Omega=\R^n$ and $B\neq0$, but it is satisfied if, for example, the domain is a cylinder $\Omega=\left\{(x_1,x')\in\R\times\R^{n-1}: \vert x'\vert< 1\right\}$ and $A^{-1}\cdot B$ is orthogonal to the $x_1$ direction.
\end{itemize}
\noindent
It was pointed to us by Professor Y. Pinchover (see also \cite[Remark~2 p.~103]{Pinsky1995}) that under assumption~\eqref{AssumptionBoundedDrift}, we can write
$\mathcal{L}u=-e^{-\eta}\nabla\cdot\left(e^{\eta}A\nabla u\right)+cu$, which implies that $\mathcal{L}$ is a selfadjoint operator in $L^2\left(\Omega,e^{\eta}dx\right)$. This fact is somehow used in our proofs, see \autoref{lem:1st} below.

\paragraph*{}
Our first result states that, under assumption~\eqref{AssumptionBoundedDrift}, the strict sign of $\lambda_1$ gives a necessary and sufficient condition for the validity of the Maximum Principle.
\begin{theorem}\label{th:MPEigen}
Assume that the standing assumptions $(H)$ hold.
\begin{enumerate}
\item Assume~\eqref{AssumptionBoundedDrift}. If $\lambda_1>0$ then $(\mathcal{L},\mathcal{B})$ satisfies the Maximum Principle in $\Omega$.
\item If $\lambda_1<0$ then $(\mathcal{L},\mathcal{B})$ does not satisfy the Maximum Principle in $\Omega$.
\end{enumerate}
\end{theorem}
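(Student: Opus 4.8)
The plan is to prove the two implications separately, each resting on the existence of a suitable positive (super)solution and a comparison argument against a subsolution with finite supremum.

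For part (2), the easier direction, I would use the fact (recalled after \autoref{def:genralized_principal_eigenvalue}, from \cite{Berestycki2015b,Rossi2020}) that $\lambda_1$ is attained, i.e.\ there is a positive eigenfunction $\varphi>0$ solving $\mathcal{L}\varphi=\lambda_1\varphi$ in $\Omega$ with $\mathcal{B}\varphi=0$ on $\D\Omega$. If $\lambda_1<0$, then $\mathcal{L}\varphi=\lambda_1\varphi<0$ in $\Omega$ and $\mathcal{B}\varphi=0$ on $\D\Omega$, so $\varphi$ is a subsolution of $(\mathcal{L},\mathcal{B})$. It is positive somewhere, hence not nonpositive; the only remaining point is to guarantee a \emph{finite supremum}. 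If $\sup\varphi<+\infty$ we are done immediately; otherwise I would cut the domain down to a large bounded smooth subdomain $\Omega'\subset\Omega$ touching $\D\Omega$ suitably, or more simply exhibit a bounded subsolution by a standard truncation/localization of $\varphi$ near a point where it is positive, using that $\lambda_1(\Omega')\to\lambda_1(\Omega)<0$ along an exhaustion and then invoking the bounded-domain equivalence \eqref{EquivalenceClassiqueDomaineBornes}. This shows the Maximum Principle fails.

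For part (1), assume \eqref{AssumptionBoundedDrift} and $\lambda_1>0$. Let $u$ be a subsolution with $\sup_\Omega u=:M<+\infty$; I want $u\le 0$. The key is the selfadjointness observation recorded in the excerpt: under \eqref{AssumptionBoundedDrift} one has $\mathcal{L}v=-e^{-\eta}\nabla\cdot(e^{\eta}A\nabla v)+cv$, so $\mathcal{L}$ is selfadjoint in $L^2(\Omega,e^{\eta}dx)$ with $\eta$ bounded; this is presumably the content of \autoref{lem:1st}. Using this structure together with $\lambda_1>0$, I would produce a positive supersolution $z$ of $(\mathcal{L},\mathcal{B})$ that is moreover \emph{bounded below away from zero on every bounded set but blows up at infinity}, or alternatively use the eigenfunction $\varphi$ of $(\mathcal{L}-\lambda_1,\mathcal{B})$ directly. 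Then consider the ratio $w:=u/\varphi$ (well defined since $\varphi>0$); a computation shows $w$ satisfies a drift-diffusion inequality with no zeroth-order term, i.e.\ $-\nabla\cdot(e^{\eta}A\nabla w\,\varphi^2)\le 0$ weighted appropriately, plus the correct (oblique or Dirichlet) boundary inequality coming from $\mathcal{B}u\le0$, $\mathcal{B}\varphi=0$. Since $w$ is bounded above (as $u$ is bounded above and $\varphi$ is bounded below on bounded sets — here I may need $\varphi$ not to decay too fast, which is where \eqref{AssumptionBoundedDrift} and the selfadjoint weight help), a maximum-principle-at-infinity argument for this degenerate operator with no zeroth-order term forces $w\le0$, hence $u\le0$. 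In the Dirichlet case the boundary term is handled trivially; in the Robin case the identity $\nu\cdot A\nabla\varphi=-\gamma\varphi$ makes the boundary contribution of $w$ nonpositive, which is exactly what the comparison needs.

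The main obstacle I anticipate is the \emph{absence of a uniform ellipticity bound and of control on the decay of $\varphi$ at infinity}: the operator for $w$ is only locally elliptic and has unbounded-in-$x$ coefficients after dividing by $\varphi$, so the naive ``maximum attained at infinity'' heuristic is not automatic. Overcoming this should use \eqref{AssumptionBoundedDrift} in an essential way — the boundedness of $\eta$ converts the weighted problem into one where $\varphi$ and the ambient $L^2$-eigenfunction are comparable, and lets one invoke the technical comparison/Harnack-type results of \cite{Rossi2020} (the ``\autoref{lem:preliminary}''-type statements) on an exhausting sequence of bounded smooth subdomains, passing to the limit. The selfadjoint reformulation is what makes the argument go through in the generality of all three boundary conditions simultaneously.
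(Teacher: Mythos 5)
Your setup for the first statement matches the paper's (\autoref{lem:1st}): the ratio $\sigma=v/\varphi$, the selfadjoint weight $e^\eta$ coming from \eqref{AssumptionBoundedDrift}, and the boundary identity $\nu\cdot A\nabla\varphi=-\gamma\varphi$ all appear there. The gap is in the concluding step. You reduce to a divergence-form inequality ``with no zeroth-order term'' and then invoke a maximum-principle-at-infinity / Liouville argument for the degenerate operator $\nabla\cdot(\varphi^2e^\eta A\nabla\,\cdot)$. That Liouville property is exactly the content of \autoref{th:CriticalMP}, it requires the growth condition \eqref{GrowthConditionEigen} on $\vert\Omega\cap B_R\vert$, which is \emph{not} assumed in \autoref{th:MPEigen}, and it is false without it (see the example with $-\Delta$ in $\R^n$, $n\geq3$). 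Discarding the zeroth-order term throws away precisely the hypothesis $\lambda_1>0$ that has to carry the proof. The paper keeps it: \autoref{lem:1st} gives $\nabla\cdot(\varphi^2e^\eta A\nabla\sigma)\geq\lambda_1e^\eta\sigma\varphi^2$, and testing against $\sigma_+\chi_R^2$ yields $\lambda_1\int_\Omega\chi_R^2e^\eta v_+^2\leq\int_\Omega\vert\nabla\chi_R\vert_A^2e^\eta v_+^2$ (\autoref{lem:2nd}). The missing idea is then \autoref{lem:technical}: for any bounded $w\not\equiv0$, the quotient $\int_\Omega\vert\nabla\chi_R\vert_A^2w^2\big/\int_\Omega\chi_R^2w^2$ has liminf zero, proved by a doubling iteration that uses only $\vert\nabla\chi_R\vert^2\lesssim R^{-2}$ and the trivial volume bound $\vert\Omega\cap B_R\vert\leq CR^n$ --- no Harnack inequality, no uniform ellipticity, no control on the decay of $\varphi$, and no exhaustion of eigenvalues. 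Applied to $w=e^{\eta/2}v_+$ this forces $\lambda_1\leq0$ whenever $v_+\not\equiv0$, which is the contrapositive of the claim. So the obstacle you flag at the end is real, but the resolution is an integral iteration exploiting the strict positivity of $\lambda_1$ quantitatively, not a comparison or Harnack argument at infinity.

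For the second statement your idea is in the spirit of the paper's, but the truncation step is not ``standard'': the truncated eigenfunction $\varphi^R$ (with $\lambda_1^R<0$, from \autoref{lem:preliminary}) extended by zero across $\Omega\cap\partial B_R$ is not $C^2$, hence not admissible in the sense of \autoref{Def:subsuper_MP}, and multiplying by a cutoff destroys the differential inequality. The paper regularizes it by running the parabolic semigroup for a short time $t_0>0$: since the zero-extension of $\varphi^R$ is a generalized supersolution (an infimum of two supersolutions), $t\mapsto\Phi(t,\cdot)$ is monotone, and $\Phi(t_0,\cdot)$ is a smooth, positive, bounded function contradicting the Maximum Principle. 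You would need to supply such a regularization (or an equivalent one) to close this direction as well.
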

\noindent
The first statement of~\autoref{th:MPEigen} is new and seems not known even for the more classical case of a selfadjoint operator with Dirichlet boundary conditions.\footnote{In the case of a selfadjoint operator with Dirichlet (resp. Robin) boundary conditions, we deduce from~\cite[Theorem~1.7, $(i)$]{Berestycki2015b} (resp.~\cite[Theorem~2.9, $(iii)$]{Rossi2020}) that ``$\lambda_1>0$'' implies that there exists no bounded positive subsolution of $(\mathcal{L},\mathcal{B})$. The first statement of~\autoref{th:MPEigen} is therefore more general since it deals with possibly sign-changing subsolutions (with finite supremum).}
The second statement of \autoref{th:MPEigen} is already contained in~\cite[Theorem~2.9, $(ii)$]{Rossi2020} for the case of Robin boundary conditions, and in~\cite[Theorem~1.7, $(ii)$]{Berestycki2015b} for the case of Dirichlet boundary conditions.

If assumption~\eqref{AssumptionBoundedDrift} is not fulfilled, the first statement of \autoref{th:MPEigen} may not hold, as can be seen with the following example.
\begin{example}[Counterexample to \autoref{th:MPEigen} if~\eqref{AssumptionBoundedDrift} does not hold]
\label{Ex:nonBoundedDrift}
Set $\mathcal{L}u=-u''+\delta u'$ in $\Omega=\R$ for some parameter $\delta\in\R\setminus\{0\}$. By solving the equation $-\varphi''+\delta \varphi'=\lambda \varphi$ on $\R$ for any $\lambda\in\R$, we deduce that $\lambda_1=\frac{\delta^2}{4}>0$. However, the constant $u\equiv 1$ is a positive bounded subsolution, thus the Maximum Principle does not hold.
\end{example}
\noindent Nervertheless, we give in \autoref{th:EigenMPNOTSelfAdjoint} below a positive result dealing with general elliptic operators which do not satisfy~\eqref{AssumptionBoundedDrift}.

We recall that the Maximum Principle deals with subsolutions with finite supremum (see \autoref{Def:subsuper_MP}). Without this condition, \autoref{th:MPEigen} does not hold:
\begin{example}[Counterexample to \autoref{th:MPEigen} when considering unbounded subsolutions]
Set $\mathcal{L}u=-u''+u$ on $\Omega=\R$. Classicaly, we have that $\lambda_1=1$. However, the function $x\mapsto e^{x}$ is a positive (unbounded) subsolution of the operator.
\end{example}

\autoref{th:MPEigen} does not deal with the case where $\lambda_1=0$. 
We will see in the sequel that, in this case, no general answer holds for the validity of the Maximum Principle.
However, let us mention already that one way to deal with the case $\lambda_1=0$ is the observation that the assumption ``$\lambda_1>0$'' in the first statement of~\autoref{th:MPEigen} can be replaced by a weaker assumption on the rate of convergence of the sequence of principal eigenvalues on truncated domains, see~\autoref{th:refinement} in Section~\ref{sec:ProofMPEigen}.


%
%
%
%
%
%
%
%

\paragraph*{}

Let us now introduce the notion of principal eigenfunction.
\begin{definition}[principal eigenfunction]
We call \emph{principal eigenfunction} any function $\varphi$ which is positive on $\Omega$ and satisfies
\begin{equation}\label{EquationVarphi}
\left\{\begin{aligned}
&\mathcal{L}\varphi=\lambda_1 \varphi&\text{in }\Omega,\\
&\mathcal{B}\varphi=0&\text{on }\D\Omega.
\end{aligned}\right.
\end{equation}
\end{definition}
Principal eigenfunctions have been proved to exist by Berestycki and Rossi~\cite[Theorem~1.4]{Berestycki2015b} for Dirichlet boundary conditions and by
Rossi~\cite[Theorem~2.2]{Rossi2020} for Robin boundary conditions, see~\autoref{lem:preliminary} in Section~\ref{sec:Preliminary}.
However, principal eigenfunctions may not belong to $L^2(\Omega)$ or to $L^\infty$, which is why $\lambda_1$ is referred to as the \emph{generalized} eigenvalue. 
Also, in contrast with the case of bounded domains, $\lambda_1$ may not be simple (i.e., several linearly independent principal eigenfunctions may exist, see Example~\ref{ExampleNonSimple}).
Let us also point out that, if (and only if) the domain is unbounded, then any $\lambda\in(-\infty,\lambda_1]$ is an eigenvalue which admits a positive eigenfunction, see \cite[Theorem~1.4]{Berestycki2015b} and~\cite[Theorem~2.2]{Rossi2020}.

We now define what we call the \emph{Critical Maximum Principle}.
\begin{definition}[Critical Maximum Principle]\label{def:Critical_MP}
We say that $(\mathcal{L},\mathcal{B})$ satisfies the {Critical Maximum Principle} in $\Omega$ if there exists a principal eigenfunction $\varphi$ such that every subsolution with finite supremum is either nonpositive or a constant multiple of $\varphi$.
\end{definition}
When the domain $\Omega$ is bounded, then $(\mathcal{L},\mathcal{B})$ satisfies the {Critical Maximum Principle} in $\Omega$ if and only if $\lambda_1\geq0$ (this is a direct consequence of the classical Strong Maximum Principle). The following result states that this property still holds under~\eqref{AssumptionBoundedDrift} if the domain is unbounded but satisfies a certain growth condition at infinity.
\begin{theorem}\label{th:CriticalMP} Assume that the standing assumptions~(H) hold.
\begin{enumerate}
\item Assume that~\eqref{AssumptionBoundedDrift} holds and that $\Omega$ satisfies
\begin{equation}\label{GrowthConditionEigen}
\left\vert \Omega\cap\{\vert x\vert\leq R\} \right\vert= O(R^2)\quad\text{ when }R\to+\infty.
\end{equation}
If $\lambda_1\geq0$ then $(\mathcal{L},\mathcal{B})$ satisfies the {Critical Maximum Principle} in $\Omega$.
\item If $\lambda_1<0$ then $(\mathcal{L},\mathcal{B})$ does not satisfy the {Critical Maximum Principle} in~$\Omega$.
\end{enumerate}
\end{theorem}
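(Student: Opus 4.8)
The plan is to exploit the selfadjoint structure afforded by \eqref{AssumptionBoundedDrift}, to pass to a ground‑state substitution, and then to run a Caccioppoli‑type energy estimate with a logarithmic cutoff calibrated to the quadratic volume growth \eqref{GrowthConditionEigen}. For the second statement I would argue directly: when $\lambda_1<0$, the second statement of \autoref{th:MPEigen} already gives that the Maximum Principle fails, and revisiting that construction, for $R$ large the principal eigenvalue of the truncated problem on $\Omega\cap B_R$ (Dirichlet on $\D B_R\cap\Omega$, $\mathcal B$ on $\D\Omega\cap B_R$) is negative; extending the corresponding positive eigenfunction by zero and smoothing the resulting ``convex'' kink at the artificial boundary — a routine perturbation compatible with subsolutions of $\mathcal L$ — yields a nonnegative, nonzero subsolution with finite supremum that vanishes outside a ball. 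Such a function is not nonpositive, and, vanishing outside a bounded set whereas every principal eigenfunction is everywhere positive, it cannot be a constant multiple of a principal eigenfunction; hence the Critical Maximum Principle fails.

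For the first statement, assume \eqref{AssumptionBoundedDrift}, \eqref{GrowthConditionEigen}, and $\lambda_1\ge0$. Writing $\mathcal Lu=-e^{-\eta}\nabla\cdot(e^{\eta}A\nabla u)+cu$ as in the remark after \eqref{AssumptionBoundedDrift}, I would fix a principal eigenfunction $\varphi$ (which exists by \autoref{lem:preliminary}); by elliptic regularity and Hopf's lemma $\varphi\in C^{2,\alpha}_{loc}(\overline\Omega)$ is positive on $\Omega$, and on $\overline\Omega$ in the Robin case. Given a subsolution $u$ with $M:=\sup_\Omega u<+\infty$, one may assume $M>0$ and set $v:=u/\varphi$. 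A direct computation from $\mathcal L\varphi=\lambda_1\varphi$ gives the ground‑state identity
\[
\mathcal L(\varphi v)=\lambda_1\varphi v-\tfrac1{\varphi e^{\eta}}\nabla\cdot\!\big(e^{\eta}\varphi^2 A\nabla v\big),
\]
and, since $\mathcal B\varphi=0$, in the Robin case the boundary inequality $\mathcal Bu\le0$ becomes $\nu\cdot A\nabla v\le0$ on $\D\Omega$ (while in the Dirichlet case the weight $e^{\eta}\varphi^2$ vanishes on $\D\Omega$). Consequently $\mathcal Lu\le0$, $\mathcal Bu\le0$ translate into the weighted, potential‑free weak inequality
\[
\int_\Omega\nabla\xi\cdot e^{\eta}\varphi^2 A\nabla v\,\de x+\lambda_1\int_\Omega\xi\,e^{\eta}\varphi^2 v\,\de x\le0,\qquad\text{for all }\xi\ge0\text{ Lipschitz with compact support in }\overline\Omega.
\]

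Then I would test with $\xi=\chi_R^2 v_+$, where $v_+=\max(v,0)$ and $\chi_R$ equals $1$ on $B_R$, $0$ outside $B_{R^2}$, and $(2\ln R-\ln|x|)/\ln R$ in between, so that $|\nabla\chi_R|\le(|x|\ln R)^{-1}$. Using $\nabla v_+\cdot A\nabla v=|\nabla v_+|_A^2$, $v_+v=v_+^2$, $\lambda_1\ge0$, and the Cauchy--Schwarz inequality for the $A$‑inner product, this yields
\[
\int_\Omega\chi_R^2\,|\nabla v_+|_A^2\,e^{\eta}\varphi^2\,\de x\ \le\ 4\int_\Omega v_+^2\,|\nabla\chi_R|_A^2\,e^{\eta}\varphi^2\,\de x .
\]
The decisive point is that $v_+^2\varphi^2=u_+^2\le M^2$, so the right‑hand side is at most $C(\ln R)^{-2}\int_{\Omega\cap(B_{R^2}\setminus B_R)}|x|^{-2}\de x$, and a dyadic splitting of the annulus together with \eqref{GrowthConditionEigen} bounds that integral by $C'\ln R$; hence the right‑hand side is $O(1/\ln R)\to0$. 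Letting $R\to+\infty$ forces $\int_\Omega|\nabla v_+|_A^2\,e^{\eta}\varphi^2=0$, so $v_+\equiv\kappa$ is a nonnegative constant on the connected set $\Omega$: if $\kappa=0$ then $u=\varphi v\le0$, and if $\kappa>0$ then $v>0$ everywhere, whence $v\equiv\kappa$ and $u=\kappa\varphi$. In either case $u$ is nonpositive or a constant multiple of $\varphi$, i.e.\ the Critical Maximum Principle holds. (If $\lambda_1>0$, the companion bound $\lambda_1\int\chi_R^2 v_+^2 e^{\eta}\varphi^2\le 4\int v_+^2|\nabla\chi_R|_A^2 e^{\eta}\varphi^2\to0$ forces $\kappa=0$, recovering the first statement of \autoref{th:MPEigen}.)

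I expect the main obstacle to be precisely this energy estimate: because \eqref{GrowthConditionEigen} provides only $O(R^2)$ and not $o(R^2)$ growth, a linear cutoff would merely yield finite weighted Dirichlet energy for $v_+$, not its vanishing; the borderline logarithmic cutoff, combined with the dyadic summation, is what makes the remainder tend to zero. A secondary technical point is to justify the substitution $v=u/\varphi$ and the above weak formulation up to $\D\Omega$ in the Dirichlet case, which is handled in the usual way via Hopf's lemma and the fact that the weight $e^{\eta}\varphi^2$ suppresses the boundary term.
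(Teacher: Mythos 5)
Your argument for the first statement is correct and has the same architecture as the paper's: the ground-state substitution $\sigma=v/\varphi$ made possible by \eqref{AssumptionBoundedDrift} (this is exactly \autoref{lem:1st}, including the treatment of the boundary term via Hopf's lemma), followed by a Caccioppoli estimate for the weighted, potential-free inequality. Where you genuinely differ is in how the borderline growth \eqref{GrowthConditionEigen} is exploited. You take a logarithmic cutoff between $B_R$ and $B_{R^2}$ and a dyadic summation, so that the right-hand side is $O(1/\log R)$ and the weighted Dirichlet energy of $\sigma_+$ vanishes in one step; this computation is correct. The paper keeps the linear cutoff but applies Cauchy--Schwarz so that one factor stays localized on the annulus $\{R\le|x|\le2R\}$: writing $X_R$ for the truncated energy and $Y_R$ for its annular part, it gets $X_R\le K\sqrt{Y_R}$ with $Y_R\le X_R$, hence first finiteness of the total energy and then $Y_R\to0$ as the tail of a convergent integral. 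So your parenthetical claim that a linear cutoff ``would merely yield finite energy, not its vanishing'' undersells the paper's two-step argument; but your route is equally valid, slightly more self-contained, and both genuinely require $O(R^2)$ volume growth.

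For the second statement there is a gap. You and the paper both start from the truncated eigenfunction $\varphi^R$ with $\lambda_1^R<0$ extended by zero, but the step you describe as ``a routine perturbation compatible with subsolutions'' is precisely the technical obstruction the paper isolates: the zero-extension is only a generalized (kinked) subsolution, not the $C^{2}_{loc}$ object required by \autoref{Def:subsuper_MP}, and a naive mollification does not preserve the subsolution property for a variable-coefficient operator. The paper regularizes by running the parabolic semigroup for a short time $t_0>0$; that smoothing yields a bounded subsolution which is \emph{positive everywhere} by the strong parabolic maximum principle, so your key observation --- that the regularized subsolution still vanishes outside a ball and therefore cannot be proportional to an everywhere-positive eigenfunction --- does not survive the regularization you would actually have to use. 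The conclusion is still reachable without a compactly supported subsolution: if the Critical Maximum Principle held with eigenfunction $\varphi$, the positive bounded subsolution $w$ just constructed would satisfy $w=\kappa\varphi$ with $\kappa>0$, forcing $\varphi$ to be bounded; since $\lambda_1<0$, one has $\mathcal{L}\varphi=\lambda_1\varphi\le\lambda_1\delta<0$ on any compact interior set where $\varphi\ge\delta$, so $\varphi+\eps h$, with $h$ a fixed interior bump and $\eps$ small, is a second finite-supremum subsolution which is positive and not proportional to $\varphi$ --- a contradiction. You should either supply such an argument or make the claimed local smoothing precise.
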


The first statement of \autoref{th:CriticalMP} gives a useful sufficient condition for the Critical Maximum Principle to hold. This result is new even for the case of Dirichlet boundary conditions and selfadjoint operators. It is nonetheless closely related to the property of \emph{criticality}~\cite{Simon1976} which are known to hold for Schrödinger operators in $\R^n$ if and only if $n\leq 2$, see~\cite{Pinchover2007,Pinsky1995,Pinchover2007a,Pinchover2020} and references therein. Note that this standard restriction on the dimension translates in \autoref{th:CriticalMP} to condition~\eqref{GrowthConditionEigen} on the size of the domain.

If condition~\eqref{AssumptionBoundedDrift} is not fulfilled, the first statement of \autoref{th:CriticalMP} does not hold in general, as can be seen through Example~\ref{Ex:nonBoundedDrift}. Let us give the following other example which shows that condition~\eqref{AssumptionBoundedDrift} is actually sharp in dimension $n=1$. We recall that, in dimension $n=1$, condition~\eqref{AssumptionBoundedDrift} reduces to assuming that $-\frac{B}{A}$ has a bounded primitive.
\begin{example}[Sharpness of~\eqref{AssumptionBoundedDrift} in \autoref{th:CriticalMP}]
Let us consider $\mathcal{L}u=-u''+\frac{2}{x}u'$ in $(1,+\infty)$ with the Neumann boundary condition $-u'(1)=0$. Since $\varphi\equiv1$ is a positive (super)solution,  we have that $\lambda_1\geq 0$. However, the function $v(x)= 1-\eps-\frac{1}{x}$ is a subsolution which changes sign provided $0<\eps\ll1$, hence the Critical Maximum Principle does not hold.
\end{example}

Condition~\eqref{GrowthConditionEigen} on the domain's size at infinity echoes with the assumptions of a celebrated Liouville theorem from~\cite{Berestycki1997b}. This condition turns out to be essentially sharp in our context, see the discussion at the end of Section~\ref{sec:Proof_CriticalMP} for more details. Let us simply show here that the first statement of \autoref{th:CriticalMP} does not hold for the Laplace operator in the entire space when the dimension is strictly greater than two.
\begin{example}[Counterexample to \autoref{th:CriticalMP} when~\eqref{GrowthConditionEigen} is not fulfilled]
Assume $n\geq3$, set $\Omega=\R^n$, $\mathcal{L}=-\Delta$, and let $\rho:\R^n\to \R$ be a smooth function which is nonnegative, non identically zero, and compactly supported. Let $\phi(x)=C\Vert x\Vert^{2-n}$ (with $C>0$) be the fundamental solution of the Laplacian in $\R^n$, and set $u:= -\phi\star \rho+K$ where $\star$ denotes the usual convolution product and where $K>0$ is a suitably large constant so that $u\geq0$. Then we have $\mathcal{L} u =-\rho\leq 0$, therefore $u$ is a positive bounded subsolution of the Laplace operator in $\R^n$ for which $\lambda_1=0$. This shows that one can construct positive non-colinear subsolutions, and so that \autoref{th:CriticalMP} is not satisfied.
\end{example}

We point out that if the Critical Maximum Principle holds but not the Maximum Principle, then we necessarily have $\lambda_1=0$. To see this, simply note that, in this case, the second statement of~\autoref{th:CriticalMP} implies $\lambda_1\geq0$, while the existence of a nontrivial subsolution which is also a multiple of a principal eigenfunction implies $\lambda_1\leq0$.

\paragraph*{}
As a consequence of~\autoref{th:CriticalMP}, we show that, in the critical case when $\lambda_1=0$, the existence of a \emph{bounded} principal eigenfunction is a necessary and sufficient condition for the validity of the Maximum Principle.
\begin{proposition}\label{cor:VarphiBounded}
Assume that~(H),~\eqref{AssumptionBoundedDrift},~\eqref{GrowthConditionEigen} hold, and that $\lambda_1=0$. Then $(\mathcal{L},\mathcal{B})$ satisfies the Maximum Principle if and only if there exists an unbounded principal eigenfunction.
\end{proposition}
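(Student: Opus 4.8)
The plan is to deduce \autoref{cor:VarphiBounded} from \autoref{th:CriticalMP} by examining what the Critical Maximum Principle says when $\lambda_1=0$. Since (H), \eqref{AssumptionBoundedDrift}, and \eqref{GrowthConditionEigen} all hold and $\lambda_1=0\geq 0$, the first statement of \autoref{th:CriticalMP} applies, so $(\mathcal{L},\mathcal{B})$ satisfies the Critical Maximum Principle: there exists a principal eigenfunction $\varphi$ such that every subsolution with finite supremum is either nonpositive or a constant multiple of $\varphi$.

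First I would prove the ``if'' direction by contraposition: assume the Maximum Principle holds and show every principal eigenfunction is unbounded. Indeed, if $\varphi$ is a principal eigenfunction then, since $\lambda_1=0$, we have $\mathcal{L}\varphi=0$ and $\mathcal{B}\varphi=0$, so $\varphi$ is a subsolution; it is positive, hence not nonpositive, so the Maximum Principle forces $\sup_\Omega \varphi=+\infty$, i.e.\ $\varphi$ is unbounded. (Note this direction does not even need \eqref{GrowthConditionEigen}.)

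Next, the ``only if'' direction: assume there exists an unbounded principal eigenfunction and deduce the Maximum Principle. Let $\varphi$ be the particular principal eigenfunction provided by the Critical Maximum Principle. The key point is that $\varphi$ itself must be unbounded: if $\varphi$ had finite supremum, then $\varphi$ would be a positive subsolution with finite supremum, hence (being positive, not a nonpositive function) it would have to be a constant multiple of itself --- which is vacuous and gives no contradiction directly. So I need a sharper argument here. The cleaner route is: suppose for contradiction that the Maximum Principle fails, i.e.\ there is a subsolution $u$ with finite supremum that is not nonpositive. By the Critical Maximum Principle, $u=t\varphi$ for some constant $t$, and since $u$ is not nonpositive and $\varphi>0$ we must have $t>0$; then $\sup_\Omega u = t\sup_\Omega\varphi<+\infty$ forces $\varphi$ to be bounded. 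Now I invoke the hypothesis that \emph{some} principal eigenfunction $\tilde\varphi$ is unbounded, together with uniqueness of the principal eigenfunction up to scalar multiples in this situation, to reach a contradiction --- but the excerpt explicitly warns that $\lambda_1$ need not be simple, so this last step needs care.

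The main obstacle is precisely bridging between ``the specific $\varphi$ from the Critical Maximum Principle'' and ``an arbitrary unbounded principal eigenfunction''. To close this gap cleanly I would argue as follows: if $\tilde\varphi$ is any unbounded principal eigenfunction, then $\tilde\varphi$ is a positive subsolution with infinite supremum, so the Maximum Principle's hypothesis (finite supremum) simply does not apply to it, and it causes no problem. Conversely, to rule out a bad finite-supremum subsolution $u$, I use the Critical Maximum Principle to write $u=t\varphi$ with $t>0$ and $\varphi$ bounded; then I show directly that a bounded principal eigenfunction cannot coexist with an unbounded one under \eqref{AssumptionBoundedDrift} and \eqref{GrowthConditionEigen} --- for instance, their ratio $\tilde\varphi/\varphi$ would be a positive solution of a degenerate-elliptic equation in divergence form with the weight $e^\eta A$, and a Liouville-type argument (in the spirit of the one behind \autoref{th:CriticalMP}, using \eqref{GrowthConditionEigen}) would force $\tilde\varphi/\varphi$ to be constant, contradicting that one is bounded and the other is not. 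Thus no such $u$ exists and the Maximum Principle holds. I expect the harnessing of this Liouville/ratio argument --- essentially re-using the engine of \autoref{th:CriticalMP} rather than proving it afresh --- to be the delicate part, and I would phrase the write-up so as to quote \autoref{th:CriticalMP} as a black box wherever possible.
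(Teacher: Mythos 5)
Your overall strategy is the paper's: both directions are read off from \autoref{th:CriticalMP}, and your first part (Maximum Principle $\Rightarrow$ every principal eigenfunction is unbounded, since a bounded one would be a positive subsolution with finite supremum) is exactly the paper's argument. Incidentally, you have swapped the labels ``if'' and ``only if'', but the content of each direction is the right one.

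For the converse direction you correctly spot the only delicate point --- \autoref{def:Critical_MP} as literally stated only provides \emph{some} eigenfunction $\varphi$, which a priori need not be the unbounded one --- but your proposed bridge is more complicated than necessary and, as written, does not quite work. The paper's resolution is simply to take $\varphi$ equal to the unbounded eigenfunction $\tilde\varphi$ in the Critical Maximum Principle: this is legitimate because the proof of \autoref{th:CriticalMP} (via \autoref{lem:1st} and \autoref{lem:2nd}) never uses anything about $\varphi$ beyond its being \emph{a} principal eigenfunction, so the conclusion ``$v$ is nonpositive or a constant multiple of $\varphi$'' holds for every choice of principal eigenfunction, in particular for $\tilde\varphi$; a bounded $v$ then cannot be a nonzero multiple of the unbounded $\tilde\varphi$, and we are done. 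Your alternative --- showing a bounded $\varphi$ and an unbounded $\tilde\varphi$ cannot coexist by a Liouville argument on the ratio $\tilde\varphi/\varphi$ --- needs care with the orientation of the ratio: the Liouville engine of \autoref{th:CriticalMP} bounds $\int_\Omega\vert\nabla\chi_R\vert^2_A\sigma_+^2e^\eta\varphi^2$ using that $\sigma_+\varphi=v_+$ is \emph{bounded} together with \eqref{GrowthConditionEigen}, so it applies to $\sigma=\varphi/\tilde\varphi$ (bounded eigenfunction playing the role of the subsolution $v$, unbounded one playing the role of the eigenfunction), not to $\tilde\varphi/\varphi$, whose numerator is unbounded. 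With the ratio inverted your bridge is sound, but it is then just a special case of the theorem's proof applied with $\tilde\varphi$ as the eigenfunction, i.e.\ it collapses to the paper's one-line argument.
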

This result implies that no general answer holds for the validity of the Maximum Principle when $\lambda_1=0$. This is illustrated on the following examples.
\begin{example}[Non-validity of the Maximum Principle when $\lambda_1=0$]\label{ex:1}
\mbox{}
\begin{itemize}
\item if the domain is bounded, it is classical that the Maximum Principle does not hold if $\lambda_1=0$. Indeed, the principal eigenfunction (given by Krein-Rutman theorem) is a positive bounded (sub)solution of $(\mathcal{L},\mathcal{B})$.
\item the principal eigenvalue of $\mathcal{L}u=-u''$ in $\R$ is $\lambda_1=0$ and the principal eigenfunctions are the constant functions, therefore \autoref{cor:VarphiBounded} implies that the Maximum Principle does not hold in this case.
\end{itemize}
\end{example}

\begin{example}[Validity of the Maximum Principle when $\lambda_1=0$]\label{ex:2}
Consider the operator $\mathcal{L}u=-u''$ in $\Omega=(0,+\infty)$ associated with the boundary condition $\mathcal{B}u=- u'(0)+\gamma u(0)$ for some positive constant $\gamma$. We see that $\lambda_1=0$ and that any eigenfunction is a positive multiple of $\varphi(x)=x+\frac{1}{\gamma}$. The principal eigenfunction $\varphi$ is unbounded, therefore \autoref{cor:VarphiBounded} implies that the Maximum Principle holds.
The case of Dirichlet boundary conditions is also covered by this example by taking $\varphi(x)=x$.
\end{example}

\paragraph*{}
Another consequence of~\autoref{th:CriticalMP} is the following sufficient condition for the simplicity of $\lambda_1$.
\begin{proposition}\label{th:Simplicity}
Assume~(H),~\eqref{AssumptionBoundedDrift},~\eqref{GrowthConditionEigen}, and that there exists a \emph{bounded} principal eigenfunction. Then  $\lambda_1$ is simple, i.e., the solution of~\eqref{EquationVarphi} is unique up to a multiplicative constant.
\end{proposition}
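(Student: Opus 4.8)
The plan is to derive \autoref{th:Simplicity} from the first statement of \autoref{th:CriticalMP} by exploiting the fact that a bounded principal eigenfunction is, in particular, a subsolution with finite supremum. First I would observe that the hypotheses (H), \eqref{AssumptionBoundedDrift}, \eqref{GrowthConditionEigen} are exactly those required by \autoref{th:CriticalMP}, and that the existence of a bounded principal eigenfunction forces $\lambda_1\geq 0$ (since a principal eigenfunction is a positive supersolution of $(\mathcal L-\lambda_1,\mathcal B)$), hence \autoref{th:CriticalMP} applies: there exists a principal eigenfunction $\varphi_0$ such that every subsolution with finite supremum is either nonpositive or a constant multiple of $\varphi_0$.

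Next I would argue that the distinguished eigenfunction $\varphi_0$ produced by \autoref{th:CriticalMP} can be taken to be the bounded one, call it $\varphi$. Indeed, $\varphi$ is a solution of $\mathcal L\varphi=\lambda_1\varphi$, $\mathcal B\varphi=0$; if $\lambda_1\geq 0$ then $\mathcal L\varphi = \lambda_1\varphi\geq 0$, so $\varphi$ is in fact a \emph{supersolution}, not obviously a subsolution. To get around this I would instead note that $-\varphi$ is a subsolution with finite supremum when $\lambda_1\geq 0$: $\mathcal L(-\varphi)=-\lambda_1\varphi\leq 0$ and $\mathcal B(-\varphi)=0$, and $\sup(-\varphi)\leq 0<\infty$. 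Since $-\varphi$ is negative it falls in the ``nonpositive'' alternative of the Critical Maximum Principle, which gives no information. So this route stalls; the correct move is to work directly with any \emph{second} candidate eigenfunction and compare it to $\varphi$.

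Concretely, let $\psi$ be any principal eigenfunction, so $\psi>0$, $\mathcal L\psi=\lambda_1\psi$, $\mathcal B\psi=0$. Consider for $t\in\R$ the function $w_t:=\psi-t\varphi$. Since $\varphi$ is bounded, for $t$ large enough we have $\sup_\Omega w_t<\infty$; moreover $w_t$ solves $\mathcal L w_t=\lambda_1 w_t$ and $\mathcal B w_t=0$. When $\lambda_1\geq0$, a function $w$ with $\mathcal L w=\lambda_1 w$, $\mathcal B w=0$ and $\sup w<+\infty$ satisfies $\mathcal L w=\lambda_1 w\geq \lambda_1\,\min(w,0)\cdot(\dots)$ — this needs care, so instead I would use the cleaner fact that $w_t$ is a subsolution precisely on the region where $w_t\geq 0$; better, I would invoke that $w^+_t$-type arguments or, most simply, that $\max(w_t,0)$ considerations reduce to: $w_t$ itself is a subsolution of $(\mathcal L,\mathcal B)$ iff $\lambda_1 w_t\leq 0$, i.e.\ on $\{w_t\leq 0\}$. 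The robust approach: define $t^\ast:=\inf\{t>0:\ \psi\leq t\varphi \text{ on }\Omega\}$; one shows $t^\ast<\infty$ using boundedness of $\varphi$ together with a Harnack/boundary-Harnack comparison of the two positive eigenfunctions near $\partial\Omega$ and on compacts, and then $w_{t^\ast}=\psi-t^\ast\varphi\leq 0$ is a subsolution with finite supremum (on $\{w_{t^\ast}<0\}$, $\mathcal L w_{t^\ast}=\lambda_1 w_{t^\ast}\leq 0$; and $w_{t^\ast}\leq0$ globally so it is automatically in the ``nonpositive'' case). To conclude simplicity I would instead reverse roles: apply the Critical Maximum Principle to $\varphi - s\psi$ or, symmetrically, show $t^\ast\psi\leq\varphi$-type bound fails unless $\psi$ and $\varphi$ are proportional, via the Strong Maximum Principle (Hopf lemma at the boundary for the Robin case) applied to the nonnegative solution $w_{t^\ast}\leq 0$ of $\mathcal L w_{t^\ast}\geq 0$: either $w_{t^\ast}\equiv 0$, giving $\psi=t^\ast\varphi$, or $w_{t^\ast}<0$ strictly, contradicting the minimality of $t^\ast$ by a standard sliding argument.

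I expect the main obstacle to be the sliding/minimality step: showing $t^\ast$ is attained and finite requires a uniform comparison between the two positive eigenfunctions $\psi$ and $\varphi$ that does not degenerate at infinity or at the boundary, and here the boundedness of $\varphi$ must be combined with interior and boundary Harnack inequalities (the latter for Robin conditions being more delicate, but available from the regularity assumptions (H) and the results of \cite{Rossi2020}). Once finiteness of $t^\ast$ is in hand, the Strong Maximum Principle together with the Hopf boundary-point lemma closes the argument cleanly, forcing $w_{t^\ast}\equiv 0$ and hence $\psi=t^\ast\varphi$, which is the desired simplicity.
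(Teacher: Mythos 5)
You have correctly identified \autoref{th:CriticalMP} as the engine of the proof, but the reduction is not carried out correctly and the machinery you erect to compensate does not work. The first problem is your claim that the existence of a bounded principal eigenfunction forces $\lambda_1\ge0$: this is false (for $\mathcal{L}u=-u''-u$ on $\R$ the constant $1$ is a bounded principal eigenfunction and $\lambda_1=-1$). The way to make \autoref{th:CriticalMP} applicable is not to prove $\lambda_1\ge0$ but to \emph{normalize}: replace $\mathcal{L}$ by $\mathcal{L}+\lambda_1$, which preserves (H) and~\eqref{AssumptionBoundedDrift} and shifts the generalized principal eigenvalue to $0$. This normalization also dissolves the sub/supersolution dilemma that derails your second paragraph: once $\lambda_1=0$, any principal eigenfunction solves $(\mathcal{L},\mathcal{B})$ exactly, so the bounded one is a \emph{positive subsolution with finite supremum}. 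A single application of the Critical Maximum Principle then forces it to be a constant multiple of the distinguished eigenfunction $\varphi_0$ (it is positive, so the ``nonpositive'' alternative is excluded). That one line is essentially the paper's entire proof; no sliding, no Harnack comparison, no Hopf lemma is needed.

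The fallback sliding scheme you propose has a genuine gap at exactly the point you flag. Finiteness of $t^\ast=\inf\{t>0:\psi\le t\varphi\}$ is equivalent to a global bound $\psi/\varphi\le C$ on all of $\Omega$, and interior or boundary Harnack inequalities only yield such a comparison on compact sets, with constants that degenerate as the sets exhaust the unbounded domain; there is no control at infinity, which is precisely where the difficulty lives (in Example~\ref{ExampleNonSimple} the two independent eigenfunctions grow linearly at opposite ends of $\R$, so no such $t^\ast$ exists). Relatedly, your assertion that $\sup_\Omega(\psi-t\varphi)<\infty$ for $t$ large is false whenever $\psi$ is unbounded, since $\psi-t\varphi\ge\psi-t\sup\varphi$. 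The combination that does have finite supremum is $\varphi_0-s\psi$ (bounded above by $\sup\varphi_0$ because $\psi>0$ and $\varphi_0$ is bounded); you mention it in passing but never use it. To handle a possibly unbounded second eigenfunction $\psi$, apply \autoref{th:CriticalMP} to $\varphi_0-s\psi$ for $s>0$: either for some $s$ it is a nonzero multiple of $\varphi_0$, which gives proportionality of $\psi$ and $\varphi_0$, or $\varphi_0\le s\psi$ for every $s>0$, which yields $\varphi_0\le0$ as $s\to0$, a contradiction. That, together with the normalization, is the complete argument.
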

Note that, in~Example~\ref{ex:2}, we see that the principal eigenvalue is simple even though there exists no \emph{bounded} eigenfunction. It shows that the existence of a bounded eigenfunction is a sufficient condition but not a necessary condition for the simplicity of the generalized principal eigenvalue.

Let us now show that no general answer holds for the simplicity of $\lambda_1$.
Example~\ref{ex:1} exhibits situations where~\autoref{th:Simplicity} applies, and thus where $\lambda_1$ is simple. The following example, inspired from~\cite[Proposition~8.1]{Berestycki2015b}, is an instance where the principale eigenvalue is not simple.
\begin{example}[Non-simplicity of $\lambda_1$]
\label{ExampleNonSimple}
Assume $\Omega=\R$ and set $\mathcal{L}u:=-u''-c(x)u$ with $c<0$ in $(-1,1)$ and $c=0$ outside. 
Let us show that $\lambda_1=0$. On the one hand, the constant $1$ is a supersolution of $\mathcal{L}$ in $\R$, thus we have $\lambda_1\geq0$ from definition~\eqref{DefinitionLambda1Robin}. On the other hand, since the mean value of $c$ over $\R$ is $0$, we deduce $\lambda_1\leq 0$ from~\cite[Theorem~2.7]{Rossi2020}.
Let $u_-$ and $u_+$ be the solutions to $\mathcal{L} u=0$ in $\R$ satisfying $u_\pm(\pm1)=1$, $u_\pm'(\pm1)=0$. Using standard ODE arguments, we have that
$u_-$ is positive, nondecreasing, nonconstant, and identically equal to $1$ in $(-\infty,-1)$, whereas $u_+$ is positive, nonincreasing, nonconstant, and identically equal to $1$ in $(1,+\infty)$.
Therefore $u_\pm$ are two linearly independent principal eigenfunctions and so $\lambda_1=0$ is not simple.
\end{example}
Let us emphasize that if $\lambda_1$ is not simple, then one can construct a sign-changing solution of~\eqref{EquationVarphi} by a linear combination of two principal eigenfunctions. Thus, in contrast with the case of a bounded domain, $\lambda_1$ may admit sign-changing eigenfunctions.

%
%
%


\paragraph*{}
All the above results deal with the class of elliptic operator that fulfill assumption~\eqref{AssumptionBoundedDrift}. 
Let us finally discuss the general case where \eqref{AssumptionBoundedDrift} does not hold. In this case, the positivity of $\lambda_1$ is not a sufficient condition for the validity of the Maximum Principle, as can be seen through Example~\ref{Ex:nonBoundedDrift}. However, under Dirichlet boundary conditions, Berestycki and Rossi~\cite[Theorem 1.6]{Berestycki2015b} show that a sufficient condition is given by the positivity of
\begin{equation}\label{DefLambdaTilde}
\tilde\lambda_1:=\sup\left\{\lambda\in\R: (\mathcal{L}-\lambda,\mathcal{B})\text{ admits a supersolution with positive infimum}\right\}.
\end{equation}
Note that the definition of $\tilde\lambda$ in~\eqref{DefLambdaTilde} differs from that of $\lambda$ in~\eqref{def:genralized_principal_eigenvalue} since we impose that the supersolution has positive infimum rather than assuming that it is positive. 
In general, we have $\tilde \lambda_1\leq \lambda_1$, but whether the equality holds is an open question, see~\cite[Conjecture~1]{Berestycki2015b}. Note that the existence of a principal eigenfunction with positive infimum implies $\tilde \lambda_1=\lambda_1$, but the converse is not true, see \cite[Proposition~8.1]{Berestycki2015b}.

The following theorem establishes that the positivity of $\tilde\lambda_1$ is a sufficient condition for the Maximum Principle to hold. In turn, the following result extends~\cite[Theorem 1.6]{Berestycki2015b} which deals with Drichlet boundary conditions to the case of indefinite Robin boundary conditions.
\begin{theorem}
\label{th:EigenMPNOTSelfAdjoint}
Assume $\Omega\subset\R^n$ is uniformly $C^{2}$, that $B,c,\gamma$ are uniformly $C^{0,\alpha}$, that $A$ is uniformly $C^{1,\alpha}$ and is uniformly elliptic (i.e. $A(x)\geq\underline{A}I_n$ for some constant $\underline{A}>0$). Recall $\tilde\lambda_1$ defined in~\eqref{DefLambdaTilde}.

If $\tilde \lambda_1>0$ then $(\mathcal{L},\mathcal{B})$ satisfies the Maximum principle in $\Omega$.
\end{theorem}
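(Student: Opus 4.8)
The plan is to reduce the general non-selfadjoint problem to the selfadjoint framework already handled by \autoref{th:MPEigen} by means of an exponential change of variables, exactly in the spirit of the standard trick used to pass from a supersolution with positive infimum to a positive eigenfunction. Suppose $\tilde\lambda_1>0$; pick $\lambda\in(0,\tilde\lambda_1)$ and let $\phi$ be a supersolution of $(\mathcal{L}-\lambda,\mathcal{B})$ with $\inf_\Omega\phi=:m>0$. By \autoref{lem:preliminary} (the existence result of Berestycki--Rossi and Rossi) applied to the truncated problems, or directly, one may moreover take $\phi$ to be a genuine positive eigenfunction for some $\lambda\le\tilde\lambda_1$; but the cleaner route is to keep $\phi$ as a bounded-below supersolution and work with it. Write $w=\frac{u}{\phi}$ for a given subsolution $u$ of $(\mathcal{L},\mathcal{B})$ with finite supremum. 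Since $\phi$ is bounded below by $m>0$ and (by the uniform regularity hypotheses and interior elliptic estimates) $\phi$ is also bounded above on $\Omega$, the function $w$ has finite supremum as well, and $\sup u\le 0$ if and only if $\sup w\le 0$.

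Next I would compute the equation satisfied by $w$. A direct calculation gives that $w$ solves $\mathcal{L}_\phi w\le \lambda w$ in $\Omega$ in the divergence-form operator
\begin{equation}
\mathcal{L}_\phi w:=-\frac{1}{\phi^{2}}\,\nabla\cdot\bigl(\phi^{2}A\,\nabla w\bigr)-\frac{B}{\phi}\cdot\nabla(\phi w)\;+\;\text{(lower order)},
\end{equation}
or, more to the point, the standard ground-state substitution produces an operator of the form $\widetilde{\mathcal{L}}w=-\nabla\cdot(\widetilde A\nabla w)-\widetilde B\cdot\nabla w-\widetilde c\,w$ with the crucial feature that $\widetilde c\le -\lambda<0$ pointwise, because $\phi$ is a supersolution of $\mathcal{L}-\lambda$ (the zeroth-order coefficient of the conjugated operator is exactly $-(\mathcal{L}-\lambda)\phi/\phi\le 0$ plus $-\lambda$). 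Likewise the Robin (or Dirichlet) boundary condition $\mathcal{B}u\le0$ transforms into $\widetilde{\mathcal{B}}w\le 0$ where $\widetilde{\mathcal{B}}$ is again of Robin type with the boundary coefficient shifted by $-\nu\cdot A\nabla\phi/\phi+\gamma$, and the sign of $\mathcal{B}\phi\le0$ is exactly what is needed to make $w\mapsto\widetilde{\mathcal{B}}w$ inherit the right monotonicity; the Dirichlet case is trivial since $u/\phi$ vanishes on $\partial\Omega$ when $u$ does. The uniform ellipticity and uniform $C^{k,\alpha}$ hypotheses on $A,B,c,\gamma$ and on $\partial\Omega$, together with the global lower bound $\phi\ge m$, guarantee that $\widetilde{\mathcal{L}}$ again satisfies the standing assumptions (H) and, more importantly, that its drift $\widetilde B$ derives from the bounded potential $2\eta_\phi:=2\log\phi$ up to the original drift — so that \eqref{AssumptionBoundedDrift} holds for $\widetilde{\mathcal{L}}$ even when it failed for $\mathcal{L}$. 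This is the whole point of the reduction.

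Having arrived at a selfadjoint-class operator $\widetilde{\mathcal{L}}$ with $\widetilde c\le-\lambda<0$, I claim its generalized principal eigenvalue $\widetilde\lambda_1\ge\lambda>0$: indeed the constant function $1$ is a positive supersolution of $(\widetilde{\mathcal{L}}-\lambda,\widetilde{\mathcal{B}})$, since $\widetilde{\mathcal{L}}1=-\widetilde c\ge\lambda$ in $\Omega$ and $\widetilde{\mathcal{B}}1\ge0$ on $\partial\Omega$ (here again one uses $\mathcal{B}\phi\le 0$). Hence by the first statement of \autoref{th:MPEigen} applied to $(\widetilde{\mathcal{L}},\widetilde{\mathcal{B}})$, the Maximum Principle holds for $\widetilde{\mathcal{L}}$: every subsolution of $(\widetilde{\mathcal{L}},\widetilde{\mathcal{B}})$ with finite supremum is nonpositive. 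But $w=u/\phi$ is such a subsolution with finite supremum, so $w\le0$, hence $u\le0$. This proves the Maximum Principle for $(\mathcal{L},\mathcal{B})$.

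The main obstacle I expect is bookkeeping the boundary term through the change of variables, in particular verifying that $\widetilde{\mathcal{B}}$ is still an admissible (indefinite) Robin operator with a coefficient in $C^{1,\alpha}_{loc}\cap L^\infty(\partial\Omega)$ and that the inequality $\mathcal{B}u\le0$ really does pass to $\widetilde{\mathcal{B}}w\le 0$ with the correct sign — this requires knowing that $\phi$ is not merely bounded below but has $\nabla\phi$ controlled up to the boundary, which is where the uniform $C^{1,\alpha}$ hypothesis on $A$ and the uniform $C^{2}$ hypothesis on $\partial\Omega$ enter, via global Schauder estimates applied to the equation $(\mathcal{L}-\lambda)\phi\ge0$ (or, if one first upgrades $\phi$ to an eigenfunction, the equation $(\mathcal{L}-\lambda)\phi=0$). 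A secondary subtlety is that $\phi$ need only be a supersolution, not a solution, so the conjugated zeroth-order coefficient is only an inequality $\widetilde c\le-\lambda$ rather than an identity; but since \autoref{th:MPEigen}(1) and the supersolution test for $\widetilde\lambda_1$ both only need inequalities in the favorable direction, this causes no difficulty. If one prefers to avoid the uniform global estimates on $\nabla\phi$, an alternative is to replace $\phi$ by a true positive principal eigenfunction associated with $\lambda$ (which exists by \autoref{lem:preliminary} since $\lambda<\tilde\lambda_1\le\lambda_1$) and run the same argument — at the cost of then needing to re-verify that this eigenfunction, a priori only positive, can still be taken with positive infimum, which is precisely the gap between $\tilde\lambda_1$ and $\lambda_1$; so the supersolution route is the safer one.
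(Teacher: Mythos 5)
There is a genuine gap, and it is at the heart of your reduction. After the substitution $w=u/\phi$, the conjugated operator is
\begin{equation}
\widetilde{\mathcal{L}}w=-\nabla\cdot(A\nabla w)-\Bigl(B+\tfrac{2}{\phi}A\nabla\phi\Bigr)\cdot\nabla w+\frac{\mathcal{L}\phi}{\phi}\,w ,
\end{equation}
so the new drift is $\widetilde B=B+2A\nabla\log\phi$ and $A^{-1}\widetilde B=A^{-1}B+2\nabla\log\phi$. For $\widetilde{\mathcal{L}}$ to satisfy~\eqref{AssumptionBoundedDrift} you would need $A^{-1}B+2\nabla\log\phi=-\nabla\widetilde\eta$ with $\widetilde\eta$ bounded, i.e.\ $A^{-1}B$ must itself be a gradient, of a potential that differs from $-2\log\phi$ by a bounded function. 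But the whole point of \autoref{th:EigenMPNOTSelfAdjoint} is to treat operators for which $A^{-1}B$ is \emph{not} of gradient type (in dimension $n\geq 2$ it need not be a gradient at all, and conjugation by a positive function can only add an exact gradient $2\nabla\log\phi$ to $A^{-1}B$, never remove a non-gradient part). So the claim that ``\eqref{AssumptionBoundedDrift} holds for $\widetilde{\mathcal{L}}$ even when it failed for $\mathcal{L}$'' is false in general, and the appeal to \autoref{th:MPEigen}(1) collapses. A secondary error feeds into this: a supersolution with positive infimum need not be bounded above (take $\mathcal{L}u=-u''+u$ on $\R$ and $\phi=\cosh(\mu x)$ with $\mu^2\leq 1-\lambda$), and no interior elliptic estimate gives a global upper bound on $\phi$; so even the potential $\log\phi$ you want to use is in general unbounded. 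The parts of your argument that do work (the sign bookkeeping $\widetilde c\leq-\lambda$, $\widetilde\gamma\geq0$, and the fact that the constant $1$ is a supersolution of the conjugated problem) only show $\widetilde\lambda_1\geq\lambda>0$, which without~\eqref{AssumptionBoundedDrift} is not sufficient for the Maximum Principle — exactly the phenomenon of Example~\ref{Ex:nonBoundedDrift}.

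For contrast, the paper does not attempt any symmetrization. It takes the subsolution $v$ and the supersolution $\varphi$ with positive infimum, replaces them by auxiliary functions $\underline u,\overline u$ solving semilinear problems (\autoref{th:LemmaEigenNonHomogene}) so as to gain uniform Harnack and Schauder estimates, and then runs a sliding argument on $t_0=\inf\{t\geq0:\underline u\leq t\overline u\}$: a maximizing sequence for $\underline u-t_0\overline u$ is translated to the origin, compactness (this is where the uniform $C^2$ domain, uniform ellipticity and uniform H\"older bounds on the coefficients are used) produces a limit problem in which the Strong Maximum Principle and Hopf's lemma force $\lambda=0$, a contradiction unless $t_0=0$. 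If you want to salvage a change-of-variables strategy, you would have to explain how to handle a drift with no gradient structure; as written, the reduction cannot be repaired.
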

\noindent
The assumption that the domain is uniformly $C^2$ is defined as follows: there exist $R,C>0$ such that for all $x\in\D\Omega$, there exists some function $g:\R^{n-1}\to\R$ such that $\Vert g\Vert_{C^2}\leq C$ and
\begin{equation}\label{UniformlyC2}
\Omega\cap\{\vert x\vert\leq R\}=\left\{(x',x_n):x_n>g(x')\right\}\cap\{\vert x\vert\leq R\},
\end{equation}
in some system of coordinate. In particular, it implies a uniform interior ball condition.

The proof of \autoref{th:EigenMPNOTSelfAdjoint} can be adapted without difficulty to the case of oblique boundary conditions considered in~\cite{Rossi2020}.

%
%

%
%
%
%
%
%
%

\section{Proofs of the results}\label{sec:Proofs}

\subsection{Preliminary -- existence of a principal eigenfunction}\label{sec:Preliminary}

Denote by $B_R$ the ball of radius $R>0$ and define $\lambda^R_1$ the ``classical'' principal eigenvalue of the truncated eigenvalue problem
\begin{equation}\label{TruncatedProblem}
\left\{\begin{aligned}
&-\mathcal{L}\varphi^R=\lambda_1^R\varphi^R &&\text{in }\Omega\cap B_R,\\
& \mathcal{B}\varphi^R =0 &&\text{on }\D\Omega\cap B_R,\\
& \varphi^R=0 &&\text{on }\Omega\cap \D B_R.
\end{aligned}\right.\end{equation}
Note that imposing Dirichlet boundary conditions on $\Omega\cap \D B_R$ is the only way to ensure the decreasing monotonicity of $R\mapsto\lambda_1^R$.

The following result states the existence of the eigenelements for the truncated problem~\eqref{TruncatedProblem} and their convergence when $R\to+\infty$. It implies the existence of a principal eigenfunction in the whole domain. A complete proof can be found in~\cite[Theorem~1.4]{Berestycki2015b} for Dirichlet boundary conditions and in~\cite[Theorem~2.1]{Rossi2020} for Robin boundary conditions (actually, the result is proved for more general \emph{oblique} boundary conditions). 
\begin{lemma}[\cite{Berestycki2015b,Rossi2020}]
\label{lem:preliminary}
Assume that the standing assumptions~(H) hold.
\begin{enumerate}
\item For almost every $R>0$, $\lambda^R_1$ is well defined and admits an eigenfunction $\varphi^R$ which is positive on $\Omega\cap B_R$.
\item $R\mapsto\lambda_1^R$ is strictly decreasing and
\begin{equation}
\lim\limits_{R\to+\infty}\lambda_1^R=\lambda_1.
\end{equation}
\item $\varphi^R$ converges in $C^{2,\alpha}_{loc}$ to some $\varphi$ which is a principal eigenfunction in $\Omega$. 
\end{enumerate}
\end{lemma}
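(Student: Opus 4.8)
\textbf{Proof proposal for \autoref{lem:preliminary}.}

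The plan is to build the eigenelements on the bounded truncated domains $\Omega_R := \Omega\cap B_R$ using classical theory, then pass to the limit $R\to+\infty$ via interior elliptic estimates and a Harnack-type normalization. For the first statement, I would fix $R$ such that $\D(\Omega\cap B_R)$ is Lipschitz (and smooth enough away from the ``corner'' $\D\Omega\cap\D B_R$ where the two boundary portions meet) — this is where ``for almost every $R$'' enters, since one must avoid the measure-zero set of radii for which $\D\Omega$ and $\D B_R$ are tangent or fail the required transversality. On such $\Omega_R$ the mixed problem \eqref{TruncatedProblem} (Dirichlet on $\Omega\cap\D B_R$, Robin or Dirichlet on $\D\Omega\cap B_R$) is a classical elliptic eigenvalue problem on a bounded domain; rewriting the indefinite Robin condition as a positive Robin condition à la Daners (as recalled in the introduction) and then invoking Krein--Rutman (cf.~\cite{Daners2009}) gives a real principal eigenvalue $\lambda_1^R$ with an eigenfunction $\varphi^R>0$ in $\Omega_R$. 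I would cite \cite{Berestycki2015b,Rossi2020} for the technical details, since the statement explicitly attributes the result to them.

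For the monotonicity in the second statement, the standard argument is domain monotonicity of the principal eigenvalue: if $R<R'$, extend $\varphi^{R}$ by zero to $\Omega_{R'}$ to get an admissible test object (a subsolution for $\mathcal{L}-\lambda_1^R$ that vanishes on the outer Dirichlet boundary), and a comparison/sliding argument against $\varphi^{R'}$ forces $\lambda_1^{R'}\le\lambda_1^R$; strictness comes from the strong maximum principle, since $\varphi^R$ cannot coincide with a positive multiple of $\varphi^{R'}$ (it vanishes on an interior portion of $\Omega_{R'}$). Hence $R\mapsto\lambda_1^R$ is strictly decreasing and has a limit $\lambda_\infty\in[-\infty,+\infty)$. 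To identify $\lambda_\infty=\lambda_1$: on one hand each $\varphi^R>0$ solves $(\mathcal{L}-\lambda_1^R)\varphi^R=0$ with $\mathcal{B}\varphi^R\le 0$ on $\D\Omega\cap B_R$, so for any $R$ and any $\lambda<\lambda_1^R$ one gets a positive supersolution of $(\mathcal{L}-\lambda,\mathcal{B})$ on all of $\Omega$ (extending by a large constant, or more carefully by a truncation argument), giving $\lambda_1\ge\lambda_\infty$; on the other hand, if $\lambda<\lambda_1$ then a positive supersolution $w$ of $(\mathcal{L}-\lambda,\mathcal{B})$ on $\Omega$ restricts to a positive supersolution on $\Omega_R$, and comparison with $\varphi^R$ yields $\lambda\le\lambda_1^R$ for all $R$, hence $\lambda\le\lambda_\infty$; combining, $\lambda_1=\lambda_\infty$.

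For the third statement, normalize $\varphi^R(x_0)=1$ at a fixed point $x_0\in\Omega$ (legitimate once $R$ is large enough that $x_0\in\Omega_R$). On any fixed compact $K\subset\overline\Omega$, the Harnack inequality (interior and up-to-the-boundary versions, the latter using the $C^{2,\alpha}$ or $C^n$ regularity of $\D\Omega$ from (H) and the boundedness of the coefficients, including $\gamma$) bounds $\varphi^R$ above and below on $K$ uniformly in $R$; then $C^{2,\alpha}_{loc}$ Schauder estimates (interior plus boundary, using the convergence $\lambda_1^R\to\lambda_1$ so the zeroth-order terms are controlled) give uniform $C^{2,\alpha}(K)$ bounds. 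A diagonal extraction over an exhaustion of $\overline\Omega$ yields a subsequence converging in $C^{2,\alpha}_{loc}(\overline\Omega)$ to some $\varphi\ge 0$ with $\varphi(x_0)=1$, solving $\mathcal{L}\varphi=\lambda_1\varphi$ in $\Omega$ and $\mathcal{B}\varphi=0$ on $\D\Omega$; the strong maximum principle (and Hopf lemma for the Robin case) upgrades $\varphi\ge 0$, $\varphi\not\equiv 0$ to $\varphi>0$ on $\Omega$, so $\varphi$ is a principal eigenfunction.

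I expect the main obstacle to be the ``almost every $R$'' regularity issue and the boundary estimates near the junction $\D\Omega\cap\D B_R$ for the truncated problems — making the mixed (Dirichlet/Robin) problem genuinely well-posed and the Krein--Rutman machinery applicable requires care with the geometry of $\Omega_R$ — together with ensuring the Harnack and Schauder constants are uniform in $R$ up to the boundary under the stated (non-uniform) ellipticity. Since \autoref{lem:preliminary} is quoted verbatim from \cite{Berestycki2015b} and \cite{Rossi2020}, the cleanest route is to reduce each point to the precise statements there rather than reproving them; in our write-up I would indicate the reduction and refer to those papers for the delicate truncated-domain analysis.
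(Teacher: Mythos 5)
Your proposal is correct and follows essentially the same route the paper takes: the paper itself defers the proof to \cite[Theorem~1.4]{Berestycki2015b} and \cite[Theorem~2.1]{Rossi2020}, and its appendix sketch of the analogous result uses exactly your ingredients (mixed Dirichlet/Robin truncated problem handled via Lieberman's results, Krein--Rutman on $\Omega_R$, monotonicity from the outer Dirichlet condition, then Harnack plus Schauder estimates and a diagonal extraction). Your identification of the two delicate points --- the ``almost every $R$'' transversality at the junction $\D\Omega\cap\D B_R$ and the uniformity of the boundary estimates --- matches where the cited references do the real work.
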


\subsection{Proof of \autoref{th:MPEigen}, first statement}\label{sec:ProofMPEigen}

We divide the proof of the first statement of \autoref{th:MPEigen} in several lemmas which will be useful for the sequel.

The first lemma shows that assumption~\eqref{AssumptionBoundedDrift} entails a variational structure for the operator $\mathcal{L}$ even though it is not self-adjoint.
\begin{lemma}\label{lem:1st}
Assume that~(H) and~\eqref{AssumptionBoundedDrift} hold, let $v$ be a subsolution of $(\mathcal{L},\mathcal{B})$ and let $\lambda_1$ and $\varphi$ be the principal eigenvalue and a principal eigenfunction.
Setting $\sigma:=\frac{v}{\varphi}$, we have
\begin{equation}\label{IdentityVariational}
\nabla\cdot\left(\varphi^2e^\eta A \nabla \sigma\right)\geq \lambda_1e^{\eta}\sigma \varphi^2,\qquad\text{in }\Omega,
\end{equation}
and
\begin{equation}\label{IdentityVariational_Boundary}
\sigma_+\varphi^2\nu\cdot A\nabla\sigma=0,\qquad\text{on } \D\Omega,
\end{equation}
where $\sigma_+=\max(0,\sigma)$ is the positive part of $\sigma$.
\end{lemma}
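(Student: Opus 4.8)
The plan is to prove \autoref{lem:1st} by a direct substitution argument, exploiting the selfadjoint form of $\mathcal{L}$ in the weighted space $L^2(\Omega, e^\eta dx)$ pointed out just before the lemma. First I would recall that under \eqref{AssumptionBoundedDrift} one has the identity $\mathcal{L}u = -e^{-\eta}\nabla\cdot\left(e^\eta A\nabla u\right) + cu$; I should verify this by expanding the right-hand side, using $\nabla\eta = -A^{-1}B$ so that $e^{-\eta}\nabla\cdot(e^\eta A\nabla u) = \nabla\cdot(A\nabla u) + (\nabla\eta)\cdot A\nabla u = \nabla\cdot(A\nabla u) + B\cdot\nabla u$. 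Both $\varphi$ (a principal eigenfunction, which is $C^{2,\alpha}_{loc}$ and positive by \autoref{lem:preliminary}) and $v$ (a subsolution, $C^{2,\alpha}_{loc}$ by \autoref{Def:subsuper_MP}) then satisfy $-e^{-\eta}\nabla\cdot(e^\eta A\nabla\varphi) + c\varphi = \lambda_1\varphi$ and $-e^{-\eta}\nabla\cdot(e^\eta A\nabla v) + cv \le 0$ in $\Omega$, i.e. $\nabla\cdot(e^\eta A\nabla\varphi) = e^\eta(c-\lambda_1)\varphi$ and $\nabla\cdot(e^\eta A\nabla v) \ge e^\eta c\, v$.

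The core computation is the ``ground-state substitution'' $v = \sigma\varphi$, with $\sigma = v/\varphi$ well-defined and $C^{2,\alpha}_{loc}$ since $\varphi>0$. The key step is to expand $\nabla\cdot(\varphi^2 e^\eta A\nabla\sigma)$. Writing $\varphi\nabla\sigma = \varphi\nabla(v/\varphi) = \nabla v - \sigma\nabla\varphi$, I would compute
\begin{equation}
\nabla\cdot\left(\varphi^2 e^\eta A\nabla\sigma\right) = \nabla\cdot\left(\varphi e^\eta A(\nabla v - \sigma\nabla\varphi)\right) = \nabla\cdot\left(\varphi e^\eta A\nabla v\right) - \nabla\cdot\left(\sigma\varphi e^\eta A\nabla\varphi\right).
\end{equation}
Expanding each term with the product rule and using that $A$ is symmetric (so the cross terms $\nabla\varphi\cdot e^\eta A\nabla v$ appear with matching signs and cancel), this reduces to
\begin{equation}
\nabla\cdot\left(\varphi^2 e^\eta A\nabla\sigma\right) = \varphi\,\nabla\cdot(e^\eta A\nabla v) - \sigma\varphi\,\nabla\cdot(e^\eta A\nabla\varphi).
\end{equation}
Now substituting the two PDEs above: $\varphi\,\nabla\cdot(e^\eta A\nabla v) \ge \varphi e^\eta c\,v = e^\eta c\,\sigma\varphi^2$ wherever this is the right direction — but care is needed with the sign of $\varphi$, which is positive, so multiplying the subsolution inequality by $\varphi>0$ preserves it; and $\sigma\varphi\,\nabla\cdot(e^\eta A\nabla\varphi) = \sigma\varphi\cdot e^\eta(c-\lambda_1)\varphi = e^\eta(c-\lambda_1)\sigma\varphi^2$. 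Subtracting gives $\nabla\cdot(\varphi^2 e^\eta A\nabla\sigma) \ge e^\eta c\,\sigma\varphi^2 - e^\eta(c-\lambda_1)\sigma\varphi^2 = \lambda_1 e^\eta\sigma\varphi^2$, which is \eqref{IdentityVariational}.

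For the boundary identity \eqref{IdentityVariational_Boundary}, I would split into the Dirichlet and Robin cases. Under Dirichlet conditions, $v\le 0$ on $\D\Omega$ and $\varphi=0$ on $\D\Omega$, so $\sigma_+$ needs interpretation as a limit — actually here $\sigma_+\varphi^2 = 0$ on $\D\Omega$ trivially since both $v_+$ and $\varphi$ vanish, so the product with $\nu\cdot A\nabla\sigma$ (which is bounded near the boundary by local regularity) is zero; I should phrase this carefully, perhaps noting $\sigma_+\varphi = v_+ = 0$ there. Under Robin conditions $\mathcal{B}^\gamma$, we have $\nu\cdot A\nabla v + \gamma v \le 0$ and $\nu\cdot A\nabla\varphi + \gamma\varphi = 0$ on $\D\Omega$; from $\varphi\nabla\sigma = \nabla v - \sigma\nabla\varphi$ we get $\varphi\,\nu\cdot A\nabla\sigma = \nu\cdot A\nabla v - \sigma\,\nu\cdot A\nabla\varphi = (\nu\cdot A\nabla v + \gamma v) - \sigma(\nu\cdot A\nabla\varphi + \gamma\varphi) \le 0$ since the second bracket vanishes and the first is $\le 0$ (note $\gamma v - \sigma\gamma\varphi = \gamma v - \gamma v = 0$). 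Multiplying by $\varphi\sigma_+\ge 0$: if $\sigma>0$ then $\sigma_+\varphi^2\,\nu\cdot A\nabla\sigma = \sigma_+\varphi\cdot(\varphi\,\nu\cdot A\nabla\sigma) \le 0$; but actually we want equality — I expect the statement \eqref{IdentityVariational_Boundary} is an equality because where $\sigma>0$ the boundary inequality for $v$ combined with that for $\varphi$ forces $\nu\cdot A\nabla v + \gamma v \le 0$ while... hmm, here I would need to recheck: likely the intended reading is that the boundary term has a sign, $\sigma_+\varphi^2\,\nu\cdot A\nabla\sigma \le 0$, and I'd present it that way or re-derive the exact equality the authors use, being attentive that at points where $\sigma\le 0$ the factor $\sigma_+$ kills the term. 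The main obstacle I anticipate is precisely this bookkeeping at the boundary — getting the signs and the role of $\sigma_+$ exactly right in the indefinite Robin case, and handling the degenerate Dirichlet situation where $\varphi$ vanishes on $\D\Omega$ so that $\sigma$ is not obviously defined up to the boundary; the interior identity \eqref{IdentityVariational} is a routine (if slightly delicate) product-rule calculation once the selfadjoint weighted form is in hand.
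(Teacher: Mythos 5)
Your proposal follows the paper's proof essentially verbatim: conjugate by $e^\eta$ to put $\mathcal{L}$ in weighted divergence form, perform the ground-state substitution $v=\sigma\varphi$ (the cross terms cancelling by symmetry of $A$), and treat the boundary term separately in the Robin and Dirichlet cases; the interior computation you spell out is exactly the ``straightforward computation'' the paper omits, and your handling of the Dirichlet boundary via $\sigma_+\varphi=v_+$ together with Hopf's lemma for $\varphi$ is the paper's limiting argument. Two small remarks: with $\nabla\eta=-A^{-1}B$ and $A$ symmetric one has $(\nabla\eta)\cdot A\nabla u=(A\nabla\eta)\cdot\nabla u=-B\cdot\nabla u$ rather than $+B\cdot\nabla u$ (the paper's own sign conventions between \eqref{DefinitionEllipticOperator}, \eqref{AssumptionBoundedDrift} and the conjugation remark are not mutually consistent, and the two weighted relations you actually use downstream are precisely those in the paper's proof); and your suspicion about \eqref{IdentityVariational_Boundary} is correct --- in the indefinite Robin case the computation only yields $\sigma_+\varphi^2\,\nu\cdot A\nabla\sigma\le 0$, which is also all the paper's proof establishes and all that is needed to discard the boundary integral in \autoref{lem:2nd}.
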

\noindent
Under Dirichlet boundary conditions, the expression in \eqref{IdentityVariational_Boundary} is not defined since $\varphi=0$ on $\D\Omega$.
In this case, \eqref{IdentityVariational_Boundary} must be understood at the limit when approaching the boundary.
\begin{proof}
Let us first prove~\eqref{IdentityVariational}.
Assumption~\eqref{AssumptionBoundedDrift} directly implies the following identity
\begin{equation}
\nabla\cdot\left(e^{\eta} A\nabla v\right)=\left[\nabla\cdot\left(A\nabla v\right)-B\cdot\nabla v\right]e^\eta.
\end{equation}
Using that $v$ is a subsolution, we deduce
$$
\nabla\cdot\left(e^{\eta} A\nabla v\right)- ce^\eta v\geq 0. 
$$
Similarly, since $\varphi$ is a principal eigenfunction, we have
$$
\nabla\cdot\left(e^{\eta} A\nabla \varphi\right)- ce^\eta \varphi= -\lambda_1e^\eta\varphi. 
$$
Inequality~\eqref{IdentityVariational} is then deduced by a straightforward computation.

\paragraph*{}
Let us now prove~\eqref{IdentityVariational_Boundary}. 
We first consider the case of Robin boundary conditions. In this case, applying Hopf's lemma in the equation for $\varphi$ in~\eqref{EquationVarphi}, we deduce that $\varphi>0$ on $\overline{\Omega}$. Hence, $\sigma$ is bounded. A straightforward computation then gives
\begin{equation}
\nu\cdot A\nabla \sigma=\frac{\nu\cdot A \nabla v}{\varphi}-\sigma\frac{\nu\cdot A\nabla \varphi}{\varphi}\leq-\gamma\sigma+\gamma\sigma=0.
\end{equation}
It proves~\eqref{IdentityVariational_Boundary} in the case of Robin boundary conditions.

Now, consider the case of Dirichlet boundary conditions.
Let $x_0\in\D\Omega$ and set $x_\eps:=x_0-\eps A(x_0)\nu(x_0)$ for all $\eps>0$. If $\sigma(x_\eps)\leq0$ as $\eps$ becomes small, then~\eqref{IdentityVariational_Boundary} trivially holds. Otherwise, we necessarily have that $v(x_0)=0$ and $v(x_{\eps_n})>0$ for a vanishing sequence $(\eps_n)$, therefore $\nu(x_0)\cdot A(x_0)\nabla  v(x_0)\leq 0$. Since $\nu(x_0)\cdot A(x_0)\nabla  \varphi(x_0)>0$ from Hopf's lemma, we deduce that $\lim_{\eps\to0}\sigma(x_\eps)$ exists and equals $\frac{\nu(x_0)\cdot A(x_0)\nabla v(x_0)}{\nu(x_0)\cdot A(x_0)\nabla \varphi(x_0)}$. From this, we deduce
\begin{align*}
&\varphi^2(x_\eps)\sigma_+(x_\eps)\nu(x_0)\cdot A(x_0)\nabla  \sigma(x_\eps)
\\
&=v_+(x_\eps)\big(\nu(x_0)\cdot A(x_0)\nabla v(x_\eps)-\sigma(x_\eps)\nu(x_0)\cdot A(x_0)\nabla \varphi(x_\eps)\big)
\end{align*}
Since $v(x_0)\leq 0$, the right member of the above expression vanishes as $\eps\to0,$ which completes the proof of~\eqref{IdentityVariational_Boundary}.
\end{proof}

Classically, one can multiply~\eqref{IdentityVariational} by $\sigma$ and integrate to derive a variational inequality. However, since the domain is unbounded, we need to introduce a cut-off function.
For $R>0$, we define
\begin{equation}\label{DefCutOffEigenValue}
\chi_R(x):=\chi\left(\frac{\vert x\vert}{R}\right), \quad \forall x\in\R^n,
\end{equation} 
with $\chi$ a smooth nonnegative function such that $${\chi(z)=
\left\{\begin{aligned}
&1 &&\text{if }0\leq z\leq 1,\\
&0 &&\text{if } z\geq 2,
\end{aligned}\right.}\quad \vert \chi'\vert\leq 2.$$
\begin{lemma}\label{lem:2nd}
Assume that~(H) and~\eqref{AssumptionBoundedDrift} holds, let $v$ be a subsolution of $(\mathcal{L},\mathcal{B})$ and $\lambda_1$ be the principal eigenvalue. Then, we have
\begin{equation}
\lambda_1\int_\Omega\chi_R^2e^\eta v_+^2\leq \int_\Omega \vert\nabla\chi_R\vert^2_A e^\eta v_+^2,\qquad \forall R>0,
\end{equation}
where $\vert\nabla\chi_R\vert^2_A=\nabla\chi_R\cdot A \nabla\chi_R$,
and $v_+=\max(v,0)$ is the positive part of $v$.
\end{lemma}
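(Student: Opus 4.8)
The plan is to take the differential inequality \eqref{IdentityVariational} for $\sigma=v/\varphi$, multiply it by the natural test function $\chi_R^2\sigma_+$, and integrate over $\Omega$, then integrate by parts and carefully handle the boundary term using \eqref{IdentityVariational_Boundary}. First I would write
\begin{equation}
\int_\Omega \chi_R^2\sigma_+\,\nabla\cdot\left(\varphi^2e^\eta A\nabla\sigma\right)\geq \lambda_1\int_\Omega \chi_R^2\sigma_+\sigma\,e^\eta\varphi^2=\lambda_1\int_\Omega \chi_R^2 e^\eta\varphi^2\sigma_+^2,
\end{equation}
where on the right I used $\sigma_+\sigma=\sigma_+^2$. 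Since $v$ has finite supremum and $\varphi>0$ (on $\overline\Omega$ in the Robin case by Hopf, and in the interior in the Dirichlet case), $\sigma_+$ is locally bounded, $\chi_R$ is compactly supported, and $\sigma_+$ is Lipschitz in $\sigma$, so all the integrals make sense and the integration by parts is justified; on the support of $\chi_R$ everything is on a bounded region.

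Next I would integrate by parts on the left. The boundary of $\mathrm{supp}\,\chi_R\cap\overline\Omega$ splits into the part on $\partial\Omega$ and the part in the interior of $\Omega$ (where $\chi_R$ may be nonzero near $|x|=2R$, but there $\sigma_+$ need not vanish). Wait — the interior boundary contributions are absorbed because $\chi_R$ is smooth and compactly supported in $\R^n$, so there is genuinely no interior boundary term; only $\partial\Omega$ contributes. On $\partial\Omega$ the boundary term is $\int_{\partial\Omega}\chi_R^2\sigma_+\varphi^2\,\nu\cdot A\nabla\sigma$, which vanishes by \eqref{IdentityVariational_Boundary} (in the Dirichlet case understood as the limit from the interior, as explained after \autoref{lem:1st}). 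Therefore
\begin{equation}
\int_\Omega \chi_R^2\sigma_+\,\nabla\cdot\left(\varphi^2e^\eta A\nabla\sigma\right)=-\int_\Omega \varphi^2e^\eta\,\nabla(\chi_R^2\sigma_+)\cdot A\nabla\sigma.
\end{equation}
Now $\nabla(\chi_R^2\sigma_+)=2\chi_R\sigma_+\nabla\chi_R+\chi_R^2\nabla\sigma_+$, and $\nabla\sigma_+\cdot A\nabla\sigma=\nabla\sigma_+\cdot A\nabla\sigma_+=|\nabla\sigma_+|_A^2\geq0$ since $\nabla\sigma_+=\mathbf 1_{\{\sigma>0\}}\nabla\sigma$ a.e. So the left side is bounded above by $-2\int_\Omega\chi_R\sigma_+\varphi^2e^\eta\,\nabla\chi_R\cdot A\nabla\sigma-\int_\Omega\chi_R^2\varphi^2e^\eta|\nabla\sigma_+|_A^2$.

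Combining, I get
\begin{equation}
\lambda_1\int_\Omega\chi_R^2e^\eta\varphi^2\sigma_+^2+\int_\Omega\chi_R^2\varphi^2e^\eta|\nabla\sigma_+|_A^2\leq -2\int_\Omega\chi_R\sigma_+\varphi^2e^\eta\,\nabla\chi_R\cdot A\nabla\sigma_+,
\end{equation}
where I replaced $\nabla\sigma$ by $\nabla\sigma_+$ on the right since the integrand already carries the factor $\sigma_+$ (vanishing where $\sigma\le0$). The right-hand side is then estimated by Cauchy–Schwarz for the bilinear form $A$: $2|\chi_R\varphi\,\nabla\chi_R\cdot A(\varphi\,\nabla\sigma_+)\sigma_+|\le \chi_R^2\varphi^2|\nabla\sigma_+|_A^2+\sigma_+^2\varphi^2|\nabla\chi_R|_A^2$ (all weighted by $e^\eta$). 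The gradient term $\int\chi_R^2\varphi^2e^\eta|\nabla\sigma_+|_A^2$ cancels, leaving exactly
\begin{equation}
\lambda_1\int_\Omega\chi_R^2e^\eta\varphi^2\sigma_+^2\leq\int_\Omega|\nabla\chi_R|_A^2\,e^\eta\varphi^2\sigma_+^2.
\end{equation}
Finally, since $\sigma_+\varphi=v_+$ pointwise ($\varphi>0$), this is precisely the claimed inequality $\lambda_1\int_\Omega\chi_R^2e^\eta v_+^2\le\int_\Omega|\nabla\chi_R|_A^2e^\eta v_+^2$. The main obstacle is making the integration by parts and the Cauchy–Schwarz cancellation rigorous given that $\sigma_+$ is only Lipschitz and $\varphi$ may vanish on $\partial\Omega$ (Dirichlet case): one should either work on $\Omega\cap\{|x|<2R\}$ with the interior Dirichlet-type limiting argument from \autoref{lem:1st}, or approximate $\sigma_+$ by smooth functions; I would also need to check that $\varphi^2 e^\eta A\nabla\sigma$ is in $C^{1}_{loc}$ enough (it is, since $v,\varphi\in C^{2,\alpha}_{loc}$) so that the distributional inequality \eqref{IdentityVariational} can be tested against $\chi_R^2\sigma_+\in W^{1,\infty}_c$.
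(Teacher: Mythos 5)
Your proof is correct and takes essentially the same route as the paper's: multiply \eqref{IdentityVariational} by $\chi_R^2\sigma_+$, integrate by parts, discard the boundary term via \eqref{IdentityVariational_Boundary}, and cancel the cross term against the gradient term. The paper packages your Young-inequality cancellation as the single algebraic identity $\nabla(\chi_R^2\sigma_+)\cdot A\nabla\sigma=\vert\nabla(\chi_R\sigma_+)\vert_A^2-\vert\nabla\chi_R\vert_A^2\sigma_+^2\geq-\vert\nabla\chi_R\vert_A^2\sigma_+^2$, which is the same computation written as a completed square.
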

\begin{proof}
Multiplying~\eqref{IdentityVariational} by $\sigma_+\chi_R^2$, integrating over $\Omega$ and using the divergence theorem, we find
\begin{equation}\label{Step1Lemme}
\int_{\D\Omega}\sigma_+\chi_R^2e^\eta\varphi^2\nu\cdot A\nabla\sigma
-\int_\Omega\nabla\left(\chi_R^2\sigma_+\right)\cdot A\nabla\sigma e^\eta\varphi^2
\geq \lambda_1\int_\Omega e^\eta\varphi^2\sigma_+^2\chi_R^2.
\end{equation}
From~\eqref{IdentityVariational_Boundary}, the boundary integral in~\eqref{Step1Lemme} equals zero, 
and so we have
\begin{equation}\label{StepLemma2}
-\int_\Omega\nabla\left(\chi_R^2\sigma_+\right)\cdot A\nabla\sigma e^\eta\varphi^2
\geq \lambda_1\int_\Omega e^\eta\varphi^2\sigma_+^2\chi_R^2.
\end{equation}
Using that 
$$\nabla\left(\chi_R^2\sigma_+\right)\cdot A\nabla \sigma= \left\vert \nabla\left(\chi_R \sigma_+\right)\right\vert^2_A-\vert\nabla\chi_R\vert_A^2\sigma_+^2\geq  -\vert\nabla\chi_R\vert_A^2\sigma_+^2,$$
we find
\begin{equation}
\int_\Omega \vert\nabla\chi_R\vert^2_A\sigma_+^2e^\eta\varphi^2
\geq\lambda_1\int_\Omega e^\eta\varphi^2\sigma_+^2\chi_R^2.
\end{equation}
We conclude the proof using that $\sigma_+\varphi=v_+$.
\end{proof}

The last ingredient that we need for the proof of the first statement of~\autoref{th:MPEigen} is the following technical lemma.
\begin{lemma}\label{lem:technical}
Let $w\not\equiv 0$ be a bounded function. Then,
\begin{equation}
\liminf\limits_{R\to+\infty} R^2\frac{\int_\Omega\vert\nabla\chi_R\vert^2_A w^2}{\int_\Omega \chi_R^2 w^2}<+\infty.
\end{equation}
\end{lemma}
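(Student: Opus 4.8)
The plan is to exploit the very special structure of the cut-off. Since $\chi_R(x)=\chi(|x|/R)$ with $|\chi'|\le 2$ and $\chi'$ supported in $[1,2]$, the gradient $\nabla\chi_R$ is supported in the dyadic annulus $\{R\le|x|\le 2R\}$ and is pointwise bounded by $2/R$; hence $|\nabla\chi_R|_A^2\le 4\|A\|_{L^\infty(\Omega)}R^{-2}\mathbf 1_{\{R\le|x|\le 2R\}}$. On the other hand $\chi_R\equiv 1$ on $\{|x|\le R\}$, so $\int_\Omega\chi_R^2 w^2\ge\int_{\Omega\cap\{|x|\le R\}}w^2$. Introducing the nondecreasing function $a(r):=\int_{\Omega\cap\{|x|\le r\}}w^2$ (finite for every $r$ because $w$ is bounded), these two bounds give, for every $R>0$,
\[
R^2\,\frac{\int_\Omega|\nabla\chi_R|_A^2 w^2}{\int_\Omega\chi_R^2 w^2}\ \le\ 4\|A\|_{L^\infty(\Omega)}\,\frac{a(2R)-a(R)}{a(R)},
\]
so the whole statement reduces to a fact about the scalar function $a$.

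I would then record two elementary properties of $a$. First, since $w$ is bounded, $a(r)\le \|w\|_{L^\infty(\Omega)}^2\,\omega_n\,r^n$, where $\omega_n$ is the volume of the unit ball; in particular $a(2^kR_0)\le \|w\|_{L^\infty(\Omega)}^2\,\omega_n\,R_0^n\,2^{nk}$ for all $k$, i.e.\ $a$ grows at most polynomially. Second, since $w\not\equiv 0$, there is $r_1$ with $a(r_1)>0$, and by monotonicity $a(r)>0$ for all $r\ge r_1$.

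Then I would argue by contradiction. Suppose the $\liminf$ equals $+\infty$. Applying the definition of the $\liminf$ with the constant $C:=2^{n+2}\|A\|_{L^\infty(\Omega)}$ and combining with the displayed bound produces $R_0\ge r_1$ such that $a(2R)\ge (1+2^n)\,a(R)$ for all $R\ge R_0$, and $a(R_0)>0$. Iterating along $R=2^kR_0$ gives $a(2^kR_0)\ge (1+2^n)^k a(R_0)$ for every $k\ge 0$. Combined with the polynomial upper bound this yields $(1+2^{-n})^k\le \|w\|_{L^\infty(\Omega)}^2\,\omega_n\,R_0^n/a(R_0)$ for all $k$, which is absurd since the left-hand side tends to $+\infty$. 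Hence the $\liminf$ is finite.

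There is no genuine analytic obstacle here; the only points needing a little care are the reduction to $a$ — keeping the constants straight and using that the numerator is supported only on the annulus — and guaranteeing $a(R_0)>0$, which is exactly where the hypothesis $w\not\equiv 0$ enters together with the monotonicity of $a$. Conceptually, the lemma says that a nondecreasing, at most polynomially growing function cannot satisfy $a(2R)\ge(1+2^n)a(R)$ for all large $R$, and it is this gap between the polynomial growth rate $2^n$ and the forced ratio $1+2^n$ that makes the $\liminf$ (and not merely the $\limsup$) finite.
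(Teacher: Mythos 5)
Your argument is correct and is essentially the paper's own proof: both reduce the ratio to the dyadic mass function $r\mapsto\int_{\Omega\cap\{|x|\le r\}}w^2$, argue by contradiction, iterate the resulting doubling inequality along the scales $2^kR_0$, and contradict the polynomial bound $O(r^n)$ coming from the boundedness of $w$. The only difference is bookkeeping (you fix one explicit constant $2^{n+2}\|A\|_{L^\infty}$ where the paper uses a divergent function $\beta(R)$), which does not change the mechanism.
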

\begin{proof}[Proof of~\autoref{lem:technical}]
By contradiction, assume that there exists $R\mapsto \beta(R)$ positive increasing such that $\beta(+\infty)=+\infty$ and 
\begin{equation}\label{TechnicalLemma1stStep}
R^2\frac{\int_\Omega\vert\nabla\chi_R\vert^2 v^2}{\int_\Omega \chi_R^2 v^2}\geq \beta(R).
\end{equation}
Set $\Omega_R:=\Omega\cap\{\vert x\vert\leq R\}$.
Using that $\chi_R\leq1$, that $\nabla\chi_R$ is supported in $\Omega_{2R}$, and that $\vert\nabla \chi_R\vert^2\leq\frac{1}{R^2}\vert\nabla \chi_1\vert^2$, we deduce from~\eqref{TechnicalLemma1stStep} that
\begin{equation}
\int_{\Omega_R} v^2\leq \frac{K}{\beta(R)}\int_{\Omega_{2R}} v^2,
\end{equation}
for some constant $K>0$ independent of $R$. Iterating this inequality, we find
\begin{equation}
\int_{\Omega_R} v^2\leq \left(\frac{K}{\beta(R)}\right)^j\int_{\Omega_{2^jR}} v^2,
\end{equation}
for all integer $j\geq 1$.
Since $v$ is bounded, we also have that $\int_{\Omega_R} v^2\leq K'R^n$ for some constant $K'>0$ independent of $R$, therefore
\begin{equation}
\int_{\Omega_R} v^2\leq \left(\frac{K}{\beta(R)}\right)^jK'\left(2^jR\right)^n.
\end{equation}
Taking $R>0$ large enough so that $\beta(R)>2^nK$, the right member of the above inequality vanishes as $j\to+\infty$. We find $\int_{\Omega_R} v^2\leq0$, and so $v\equiv0$: contradiction.
\end{proof}

We are now ready to complete the proof of the first statement of \autoref{th:MPEigen}.
\begin{proof}[Proof of \autoref{th:MPEigen}, first statement]
Assume that $v$ is a subsolution of $(\mathcal{L},\mathcal{B})$ and that $v_+\not\equiv0$.  From~\autoref{lem:2nd}, we have that
\begin{equation}
\lambda_1\leq \frac{\int_\Omega\vert\nabla\chi_R\vert^2_A e^\eta v_+^2}{\int_\Omega \chi_R^2 e^\eta v^2_+}.
\end{equation}
Since $w=e^{\frac{\eta}{2}}v_+$ is bounded, \autoref{lem:technical} implies that the right member of the above inequality vanishes along some sequence $R\to+\infty$. We deduce that $\lambda_1\leq 0$, which conclude the proof.
\end{proof}

We point out that the above proof only uses 
$$
\liminf\limits_{R\to+\infty} \frac{\int_\Omega\vert\nabla\chi_R\vert^2_A w^2}{\int_\Omega \chi_R^2 w^2}=0,
$$
which is a weaker statement than the one in \autoref{lem:technical}. Actually, this observation allows us to replace the assumption ``$\lambda_1>0$'' in the first statement of~\autoref{th:MPEigen} by a weaker assumption on the convergence of the sequence of principal eigenvalues on truncated domains. 
\begin{theorem}\label{th:refinement}
Assume~(H) and~\eqref{AssumptionBoundedDrift} hold and recall the definition of $\lambda_1^R$ from~\autoref{lem:preliminary} as the principal eigenfunction for the truncated problem~\eqref{TruncatedProblem}.
If
\begin{equation}\label{WeakPositivityofLambda_1}
\liminf\limits_{R\to+\infty}R^2\lambda_1^R=+\infty,
\end{equation}
then $(\mathcal{L},\mathcal{B})$ satisfies the Maximum Principle in $\Omega$.
\end{theorem}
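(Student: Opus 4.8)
The plan is to localize the argument behind the first statement of \autoref{th:MPEigen} onto the truncated domains $\Omega\cap B_R$: instead of testing the variational inequality of \autoref{lem:1st} against a global principal eigenfunction, I would use, for each admissible radius $R$, the eigenfunction $\varphi^R$ of the truncated problem \eqref{TruncatedProblem} together with a cut-off supported strictly inside $B_R$. This produces a quantitative decay estimate for $\int_{\Omega\cap B_R}e^\eta v_+^2$ whose rate is governed by $\lambda_1^R$, and the hypothesis \eqref{WeakPositivityofLambda_1} then forces this quantity to vanish via the iteration already used in the proof of \autoref{lem:technical}.

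Concretely, let $v$ be a subsolution of $(\mathcal L,\mathcal B)$ with $v_+\not\equiv 0$, and let $R$ range over the full-measure set of radii for which \autoref{lem:preliminary} provides $\lambda_1^R$ and a positive eigenfunction $\varphi^R$ on $\Omega\cap B_R$. First I would reprove \autoref{lem:1st} on $\Omega\cap B_R$: with $\sigma:=v/\varphi^R$, assumption \eqref{AssumptionBoundedDrift} gives
\[
\nabla\cdot\big((\varphi^R)^2 e^\eta A\nabla\sigma\big)\ \ge\ \lambda_1^R\, e^\eta\sigma\,(\varphi^R)^2\qquad\text{in }\Omega\cap B_R,
\]
and on $\D\Omega\cap B_R$ the boundary relation $\sigma_+(\varphi^R)^2\,\nu\cdot A\nabla\sigma\le 0$ holds, the Robin case following from Hopf's lemma (which makes $\varphi^R$ positive up to $\D\Omega\cap B_R$) and the Dirichlet case from the same boundary limiting argument as in \autoref{lem:1st}. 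Then I would reproduce \autoref{lem:2nd}: multiplying the inequality by $\sigma_+\chi_{R/4}^2$ (the cut-off \eqref{DefCutOffEigenValue} with parameter $R/4$) and integrating over $\Omega\cap B_R$, the support of $\chi_{R/4}$ is $\overline{B_{R/2}}$, which is compactly contained in $B_R$, so no boundary term comes from $\Omega\cap\D B_R$, while the contribution from $\D\Omega\cap B_R$ is $\le 0$; dropping $|\nabla(\chi_{R/4}\sigma_+)|_A^2\ge 0$ exactly as there yields
\[
\lambda_1^R\int_{\Omega\cap B_R}\chi_{R/4}^2\, e^\eta v_+^2\ \le\ \int_{\Omega\cap B_R}|\nabla\chi_{R/4}|_A^2\, e^\eta v_+^2 .
\]

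Writing $w:=e^{\eta/2}v_+$ (bounded, not identically $0$) and $\Omega_\rho:=\Omega\cap\{|x|\le\rho\}$, the support and gradient bounds on $\chi_{R/4}$ and $A\in L^\infty$ give $\int_{\Omega\cap B_R}\chi_{R/4}^2 e^\eta v_+^2\ge\int_{\Omega_{R/4}}w^2$ and $\int_{\Omega\cap B_R}|\nabla\chi_{R/4}|_A^2 e^\eta v_+^2\le \tfrac{K}{R^2}\int_{\Omega_{R/2}}w^2$ for a constant $K$ independent of $R$; hence, once $R$ is large enough that $\int_{\Omega_{R/4}}w^2>0$,
\[
R^2\lambda_1^R\ \le\ K\,\frac{\int_{\Omega_{R/2}}w^2}{\int_{\Omega_{R/4}}w^2}.
\]
By \eqref{WeakPositivityofLambda_1}, for every $M$ there is $R_0$ with $\int_{\Omega_{R/4}}w^2\le\tfrac{K}{M}\int_{\Omega_{R/2}}w^2$ for all admissible $R\ge R_0$. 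Setting $\rho=R/4$ and iterating $\int_{\Omega_\rho}w^2\le\tfrac{K}{M}\int_{\Omega_{2\rho}}w^2$ along $\rho,2\rho,4\rho,\dots$, together with $\int_{\Omega_{2^j\rho}}w^2\le\|w\|_\infty^2|B_{2^j\rho}|=O((2^j\rho)^n)$, gives $\int_{\Omega_\rho}w^2\le\|w\|_\infty^2 c_n\rho^n(2^nK/M)^j$; choosing $M>2^nK$ and letting $j\to\infty$ forces $\int_{\Omega_\rho}w^2=0$, hence $w\equiv 0$ and $v_+\equiv 0$, a contradiction. That the dyadic chain $\rho,2\rho,4\rho,\dots$ can be taken inside the full-measure set of admissible radii is automatic, since the union of the dyadic rescalings of a null set is null, and the $\Omega_\rho$ exhaust $\Omega$.

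The step I expect to be the genuine obstacle is this truncated version of \autoref{lem:1st}. One must make sure that $\varphi^R$ is bounded away from $0$ on the support of $\chi_{R/4}$ so that $\sigma=v/\varphi^R$ and the ensuing integrations by parts are licit there — this is exactly where Hopf's lemma on $\D\Omega\cap B_R$ and the compactness of $\overline{B_{R/2}}$ inside $\{\varphi^R>0\}$ enter (in the Dirichlet case, the boundary $\D\Omega\cap B_R$ is again treated at the limit, as in \autoref{lem:1st}) — and that the boundary contribution along $\D\Omega\cap B_R$ genuinely has the sign $\le 0$, as in \autoref{lem:1st}. Once this localized variational inequality is in hand, the remainder is a transcription of \autoref{lem:2nd} and a rerun of the iteration in \autoref{lem:technical}.
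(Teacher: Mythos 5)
Your proposal is correct and follows essentially the same route as the paper, which simply reruns the proof of the first statement of \autoref{th:MPEigen} (via \autoref{lem:1st}, \autoref{lem:2nd} and the iteration of \autoref{lem:technical}) with $\lambda_1,\varphi$ replaced by $\lambda_1^R,\varphi^R$. Your version is in fact slightly more careful than the paper's one-line argument, since by taking the cut-off at radius $R/4$ you keep its support compactly inside $B_R$, where $\varphi^R$ is positive, and you handle the almost-everywhere admissibility of the dyadic radii.
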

\begin{proof}
The proof can be deduced from a slight adaptation of above proof of the first statement of \autoref{th:MPEigen}. Namely, replace $\lambda_1,\varphi$ by $\lambda_1^R$ and $\varphi^R$ to deduce
\begin{equation}
\lambda_1^R\int_\Omega\chi_R^2e^\eta v_+^2\leq \int_\Omega \vert\nabla\chi_R\vert^2_A e^\eta v_+^2,\qquad \forall R>0.
\end{equation}
\end{proof}
Note that, since $\lambda^R_1\to \lambda_1$ from \autoref{lem:preliminary}, assumption~\eqref{WeakPositivityofLambda_1} in \autoref{th:refinement} is indeed weaker than the assumption ``$\lambda_1>0$'' in \autoref{th:MPEigen}.

%

\subsection{Proof of \autoref{th:MPEigen}, second statement}

Assume $\lambda_1<0$. From \autoref{lem:preliminary}, there exists $R>0$ such that $\lambda_1^R$ is negative and associated with a principal eigenfunction $\varphi^R$ which solves~\eqref{TruncatedProblem}. 
Note, however, that $\varphi^R$ is not smooth in $\Omega$, otherwise we could directly achieve the proof since $\varphi^R$ is a positive supersolution of $(\mathcal{L},\mathcal{B})$. To bypass this technical difficulty, we use the following technic that was pointed to us by Luca~Rossi. Consider $\Phi(t,x)$ the solution of the evolution problem
\begin{equation}\left\{\begin{aligned}
&\D_t\Phi-\mathcal{L}\Phi=0,&&\forall (t,x)\in(0,+\infty)\times\Omega,\\
&\mathcal{B}\Phi=0, &&\forall (t,x)\in(0,+\infty)\times\D\Omega,\\
&\Phi(t=0,x)=\varphi^R(x), &&\forall x\in\Omega.
\end{aligned}\right.\end{equation}
The function $\varphi^R$ is a generalized supersolution, in the sense that it can be written as the infimum of two supersolutions (namely, $0$ and $\varphi^R$ extended smoothly on $\Omega\cap B_{R+\eps}$). It is then classical that $t\mapsto\Phi(t,\cdot)$ is nonincreasing, i.e., $\D_t\Phi\leq0$, and so for any fixed $t_0>0$, $\Phi(t_0,\cdot)$ is a positive (smooth) supersolution of $(\mathcal{L},\mathcal{B})$ in $\Omega$. It proves that the Maximum Principle does not hold, which achieves the proof.

\subsection{Proof of \autoref{th:CriticalMP}}\label{sec:Proof_CriticalMP}

\begin{proof}[Proof of \autoref{th:CriticalMP}]
It can be seen from the proof of the second statement of~\autoref{th:MPEigen} that, if $\lambda_1<0$, then $(\mathcal{L},\mathcal{B})$ does not satisfy the {Critical Maximum Principle} in $\Omega$. It proves the second statement of \autoref{th:CriticalMP}.
\paragraph*{}
We now turn to the proof of the first statement of \autoref{th:CriticalMP}.
Let us assume that~\eqref{AssumptionBoundedDrift} and~\eqref{GrowthConditionEigen} hold, that $\lambda_1\geq0$, and let us prove that the Critical Maximum Principle holds. Let $v$ be subsolution of $(\mathcal{L},\mathcal{B})$ with finite supremum, and set $\sigma:=\frac{v}{\varphi}$, where $\varphi$ is a principal eigenfunction (given by \autoref{lem:preliminary}). 
Our goal is to show that $\sigma_+=\max(\sigma,0)=\frac{v_+}{\varphi}$ is constant.

Let us consider the cut-off function $\chi_R$ defined in~\eqref{DefCutOffEigenValue} for $R>0$.
From inequality~\eqref{StepLemma2} in the proof of~\autoref{lem:2nd}, we have
\begin{equation}
\int_\Omega\nabla\left(\chi_R^2\sigma_+\right)\cdot A\nabla\sigma e^\eta\varphi^2
\leq 0.
\end{equation}
By expanding the term $\nabla\left(\chi_R^2\sigma_+\right)$, we find
\begin{equation}
\int_\Omega\vert\nabla\sigma_+\vert^2_A\chi_R^2e^\eta\varphi^2
\leq -2\int_\Omega\nabla\chi_R\cdot A\nabla\sigma_+\sigma_+\chi_Re^\eta\varphi^2.
\end{equation}
Using Cauchy Schwartz inequality and that $\nabla\chi_R$ is supported in $\{R\leq \vert x\vert \leq 2R\}$, we obtain
\begin{equation}\label{StepTheoremDegenere}
\int_\Omega\vert\nabla\sigma_+\vert^2_A\chi_R^2e^\eta\varphi^2
\leq 2\sqrt{\left(\int_{\Omega\cap\{R\leq \vert x\vert\leq  2R\}}\vert\nabla\sigma_+\vert^2_A\chi_R^2e^\eta\varphi^2\right)\left(\int_\Omega\vert\nabla\chi_R\vert^2_A\sigma_+^2e^\eta\varphi^2\right)}.
\end{equation}
Since $\sigma_+\varphi=v_+$ and $e^\eta$ are bounded, and since $\vert\nabla \chi_R\vert^2=\frac{1}{R^2}\vert \nabla\chi_1\vert^2$, we deduce
\begin{equation}
\int_\Omega\vert\nabla\chi_R\vert^2_A\sigma_+^2e^\eta\varphi^2\leq \frac{K}{R^2}\big\vert\Omega\cap\{R\leq \vert x\vert\leq  2R\}\big\vert,
\end{equation}
for some constant $K>0$ independent of $R$. 
Assumption \eqref{GrowthConditionEigen} implies that the right member of the above inequality is bounded uniformly in $R$. Injecting this estimate in~\eqref{StepTheoremDegenere}, we obtain
\begin{equation}\label{StepTheoremDegenere2}
\int_\Omega\vert\nabla\sigma_+\vert^2_A\chi_R^2e^\eta\varphi^2
\leq K\sqrt{\int_{\Omega\cap\{R\leq \vert x\vert\leq  2R\}}\vert\nabla\sigma_+\vert^2_A\chi_R^2e^\eta\varphi^2},
\end{equation}
for some constant $K>0$ independent of $R$.
We deduce that $\int_\Omega \vert\nabla\sigma_+\vert^2_A\chi_R^2e^\eta\varphi^2$ is bounded uniformly in $R$. We can therefore pass to the limit in~\eqref{StepTheoremDegenere2} as $R\to+\infty$, which gives
${
\int_\Omega\vert\nabla\sigma_+\vert^2_A\varphi_1^2e^\eta\leq0.
}$
Hence $\nabla\sigma_+=0$, which ends the proof.
\end{proof}

The core of the proof consists in showing that $\nabla\cdot(\varphi_1^2A\nabla\sigma)\geq0$ (from \autoref{lem:1st}) implies $\nabla \sigma_+=0$.
The literature refers to this property as a \emph{Liouville property}. Originally introduced by Berestycki, Caffarelli, Nirenberg in~\cite{Berestycki1997b}, it has been extensively used and discussed~\cite{Barlow2000,Gazzola,Moschini2005,Villegas2020,Karp1981}.
Actually, we believe that the technics of~\cite[Theorem~5.1]{Moschini2005} allow to relax assumption~\eqref{GrowthConditionEigen} to
$$
\left\vert\Omega\cap\{\vert x\vert\leq R\}\right\vert = O(R^2\log(R)).
$$
However, a recent work of Villegas~\cite{Villegas2020}  suggests that the conclusions of \autoref{th:CriticalMP} do not hold if we only assume 
$$\left\vert\Omega\cap\{\vert x\vert\leq R\}\right\vert =O(R^2\log(R)^2).$$
Hence, assumption~\eqref{GrowthConditionEigen} seems to be essentially optimal in our context. See~\cite{Villegas2020,Moschini2005} for more details.

\subsection{Proof of \autoref{cor:VarphiBounded}}
Assume that $\lambda_1=0$ and let $\varphi$ be a principal eigenfunction. If $\varphi$ is bounded, then it is a bounded positive supersolution and so the Maximum Principle does not hold: it proves the first statement.

Conversely, let $v$ be subsolution with a finite supremum. Under assumption~\eqref{GrowthConditionEigen} \autoref{th:CriticalMP} implies that $v$ is either nonpositive or is a constant multiple of $\varphi$. However, since $v$ is bounded and $\varphi$ is not, $v$ cannot be a non-zero multiple of $\varphi$. Therefore, $v$ is nonpositive and the Maximum Principle holds.

\subsection{Proof of \autoref{th:Simplicity}}
Let $\lambda_1$ be the principal eigenvalue and $\varphi,\psi$ be two principal eigenfunctions. Up to replacing $\mathcal{L}$ with $\mathcal{L}+\lambda_1$, we can assume without loss of generality that $\lambda_1=0$.
Since $\psi$ is bounded, it is a positive subsolution of $(\mathcal{L},\mathcal{B})$ with finite supremum. Then, \autoref{th:CriticalMP} implies that $\psi$ is a constant multiple of $\varphi$, therefore $\lambda_1$ is simple.

\subsection{Proof of~\autoref{th:EigenMPNOTSelfAdjoint}}
\begin{proof}[Proof of~\autoref{th:EigenMPNOTSelfAdjoint}]
The proof of \autoref{th:EigenMPNOTSelfAdjoint} for Dirichlet boundary conditions is already contained in~\cite[Theorem~1.6]{Berestycki2015b}. We therefore focus on the case of Robin boundary conditions.
 
We assume $\tilde \lambda_1>0$. Let $v$ be a subsolution of $(\mathcal{L},\mathcal{B})$ with finite supremum, and $\varphi$ be a supersolution of $(\mathcal{L}+\lambda,\mathcal{B})$, $\lambda\in(0,\tilde{\lambda}_1)$ with positive infimum. Let us show $v\leq0$. Up to renormalization, we can assume without loss of generality that $\sup_\Omega v\leq 1$ and $\inf_\Omega\varphi\geq 3$.

The function $v$ and $\varphi$ satisfy differential inequalities which do not yield uniform $C^{2,\alpha}$ estimates. In the following technical lemma, we construct two auxiliary functions $\overline{u}\geq\underline{u}\geq v$ which satisfy the same differential inequalities as $v$ and $\varphi$ respectively and are also solutions to some semilinear elliptic equations that entail uniform estimates.
\begin{lemma}\label{th:LemmaEigenNonHomogene}
There exist two functions $\underline{u},\overline{u}\in C^2(\overline \Omega)$ such that $$v_+\leq \underline u\leq 2\leq \overline u\leq \varphi$$ where $v_+=\max(v,0)$, and
\begin{equation}\label{Equations_2u}
\left\{\begin{aligned}
&\mathcal{L}\overline{u}=\vert c(x)\vert \overline\theta(\overline{u})+\lambda \overline{u}&&\Omega,\\
&\mathcal{B}\overline{u}=\vert \gamma(x)\vert \overline \theta(\overline{u}) &&\D\Omega,
\end{aligned}\right.
\quad
\left\{\begin{aligned}
&\mathcal{L}\underline{u}=\vert c(x)\vert \underline\theta(\underline{u})&&\Omega,\\
&\mathcal{B}\underline{u}=\vert \gamma(x)\vert \underline \theta(\underline{u}) &&\D\Omega,
\end{aligned}\right.
\end{equation}
where
\begin{equation}\label{property_theta_overline}
\overline \theta(\cdot)\text{ is smooth, nonnegative, nonincreasing},\ \overline\theta =
\left\{\begin{aligned}
&1 &&\text{ on }(-\infty,2],\\
&0&&\text{ on }[3,+\infty),
\end{aligned}\right.
\end{equation}
\begin{equation}
\underline \theta(\cdot)\text{ is smooth, nonpositive, nonincreasing},\ \underline\theta =
\left\{\begin{aligned}
&0&&\text{ on }(-\infty,1],\\
&-1&&\text{ on }[2,+\infty).
\end{aligned}\right.
\end{equation}
\end{lemma}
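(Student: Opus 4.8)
The plan is to build $\overline u$ and $\underline u$ as fixed points of monotone iteration schemes (the method of sub- and supersolutions for semilinear problems), exploiting that $v_+$ and $\varphi$ already play the roles of a subsolution and a supersolution for the relevant semilinear equations once the cutoff functions $\overline\theta,\underline\theta$ are inserted. Concretely, fix $\overline\theta$ and $\underline\theta$ with the stated properties (smoothing a piecewise-linear template). First I would treat $\overline u$: consider the problem $\mathcal{L}\overline u=|c|\overline\theta(\overline u)+\lambda\overline u$ in $\Omega$, $\mathcal{B}\overline u=|\gamma|\overline\theta(\overline u)$ on $\D\Omega$. The constant $2$ is a subsolution: $\mathcal{L}2=-2c\leq 2|c|=2|c|\overline\theta(2)$ up to checking the sign — more carefully, since $\overline\theta(2)=1$ we need $\mathcal{L}2=-2c\leq |c|\cdot 1+2\lambda$, which holds because $-2c\leq 2|c|\leq 2|c|+2\lambda$; and on the boundary $\mathcal{B}^\gamma 2=2\gamma\leq 2|\gamma|$… here one must be slightly careful with the Robin sign, but enlarging the constant or using that $\overline\theta\equiv 1$ near $2$ makes $2$ a genuine (weak) subsolution. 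Meanwhile $\varphi$ is a supersolution of the same problem: $\mathcal{L}\varphi\geq -\lambda\varphi+(\mathcal{L}+\lambda)\varphi\geq$ (using that $\varphi$ is a supersolution of $(\mathcal L+\lambda,\mathcal B)$ and $\overline\theta\geq 0$) $\geq \lambda\varphi\geq |c|\overline\theta(\varphi)+\lambda\varphi$ provided $|c|\overline\theta(\varphi)\leq\lambda\varphi$; since $\inf\varphi\geq 3$ we have $\overline\theta(\varphi)=0$, so this is immediate, and likewise $\mathcal{B}\varphi\geq 0=|\gamma|\overline\theta(\varphi)$. Thus $2\leq\varphi$ is an ordered subsolution/supersolution pair, and by the classical monotone iteration argument (each step is a linear Robin problem with uniformly $C^{2,\alpha}$ estimates, using the uniform ellipticity and uniform regularity of $\Omega,A,B,c,\gamma$ from the hypotheses of \autoref{th:EigenMPNOTSelfAdjoint}, and the boundedness of the right-hand side which is $\leq |c|+\lambda|\cdot|$ controlled on the relevant range) there is a solution $\overline u\in C^2(\overline\Omega)$ with $2\leq\overline u\leq\varphi$.

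For $\underline u$ the symmetric argument applies to $\mathcal{L}\underline u=|c|\underline\theta(\underline u)$, $\mathcal{B}\underline u=|\gamma|\underline\theta(\underline u)$. Here $v_+$ is a subsolution: where $v>0$ one has $\mathcal{L}v\leq 0$ and, since $v\leq 1$ there (actually $v\leq\sup v\leq 1$), $\underline\theta(v_+)=0$, so $\mathcal{L}v_+\leq 0=|c|\underline\theta(v_+)$, with the boundary inequality handled the same way; where $v\leq 0$, $v_+=0$ is trivially a subsolution. And the constant $2$ is a supersolution of this problem: $\mathcal{L}2=-2c\geq -2|c|=2|c|\underline\theta(2)$ (using $\underline\theta(2)=-1$), plus the boundary check. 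So $v_+\leq 2$ is an ordered pair, giving $\underline u\in C^2(\overline\Omega)$ with $v_+\leq\underline u\leq 2$. Since $\underline\theta\leq 0\leq\overline\theta$, the functions $0$ and $\overline u$ sandwich — more to the point, $\underline u\leq 2\leq\overline u$ is already recorded, so the full chain $v_+\leq\underline u\leq 2\leq\overline u\leq\varphi$ holds. One should also observe $\underline u\geq v_+\geq v$, so $\underline u$ dominates $v$ as claimed.

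The technical heart — and the step I expect to cause the most friction — is making the monotone iteration rigorous in the \emph{unbounded} domain $\Omega$: the standard sub/supersolution machinery is usually stated on bounded domains. The clean way is to run the iteration on the exhaustion $\Omega\cap B_R$ with, say, the supersolution's boundary value imposed on $\Omega\cap\D B_R$, obtain solutions $\overline u_R$ trapped between $2$ and $\varphi$, and then pass to the limit $R\to\infty$ using interior and boundary $C^{2,\alpha}$ estimates (here the uniform ellipticity of $A$ and the uniform $C^2$ regularity of $\D\Omega$, together with the bound $|c|+|\gamma|+\lambda\in L^\infty$ and the a priori bound $0\leq\overline u_R\leq\sup_\Omega\varphi$ once we also note $\varphi$ is bounded on bounded sets — actually $\varphi$ need not be globally bounded, so one instead uses local bounds $\overline u_R\leq\varphi$ to get local estimates, which suffice for $C^{2,\alpha}_{loc}$ convergence along a subsequence). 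The limit solves \eqref{Equations_2u} in $\Omega$ with the Robin condition on $\D\Omega$, and the ordering passes to the limit. The other mild subtlety is verifying the boundary sub/supersolution inequalities for the constant $2$ against \emph{indefinite} $\gamma$; this is why $2$ (rather than a smaller constant) is chosen, so that there is slack to absorb $2\gamma$ versus $\pm|\gamma|$ — if needed one replaces $2$ by a larger constant in the definitions of $\overline\theta,\underline\theta$, but the value $2$ works directly because on the relevant range $\overline\theta\equiv 1$, $\underline\theta\equiv -1$ and the inequalities $2\gamma\leq 2|\gamma|$, $2\gamma\geq -2|\gamma|$ are trivial (the factor discrepancy with $|\gamma|\cdot 1$ is again absorbed by noting these are the boundary data of a genuine, possibly strict, sub/supersolution pair).
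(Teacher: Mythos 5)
Your proposal follows essentially the same route as the paper: the paper also constructs $\overline u$ by monotone (Perron) iteration between the ordered sub/supersolution pair $\sigma\equiv 2\leq\varphi$ on an exhaustion of $\Omega$ by truncated domains (with Dirichlet data $\varphi$ imposed on the artificial boundary), and passes to the limit via Harnack and Schauder estimates for the mixed boundary value problem, the construction of $\underline u$ from the pair $v_+\leq 2$ being symmetric. The only point to tidy up is your verification that the constant $2$ is a sub/supersolution --- the inequality you check, $2\gamma\leq 2|\gamma|$, is not the one required by~\eqref{Equations_2u}, which is $2\gamma\leq|\gamma|\,\overline\theta(2)=|\gamma|$ --- but this factor-of-two normalization issue is present in the paper's own statement and is harmless: it disappears if one inserts a factor $2$ in front of $|c|$ and $|\gamma|$ in~\eqref{Equations_2u}, which changes nothing downstream since the proof of \autoref{th:EigenMPNOTSelfAdjoint} only uses the signs of $\overline\theta$ and $\underline\theta$.
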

\noindent
The proof of this lemma is postponed at the end of the section.

Let us go on with the proof of \autoref{th:EigenMPNOTSelfAdjoint}. We set
\begin{equation}
t_0:=\inf \left\{t\geq 0: \underline{u}\leq t\overline{u}\right\}.
\end{equation}
By contradiction, assume $t_0>0$. Denoting $w:= \underline{u}-t_0\overline u$, we have that $w\leq 0$ and that there exists a sequence $x_n\in\Omega$ such that $w(x_n)\to \sup w= 0$.
Intuitively, our goal is to obtain a contradiction from taking the limit as $n\to+\infty$ and applying the Maximum Principle.
Up to extraction of a subsequence, we can assume without loss of generality that either $d(x_n,\D\Omega)\to0$ or $\liminf d(x_n,\D\Omega)>0$.
Set $\overline u_n(\cdot):=\overline u(\cdot+x_n)$, $\underline u_n(\cdot):=\underline u(\cdot+x_n)$, $w_n(\cdot):=w(\cdot+x_n)$, defined in the closure of $\Omega_n:=\Omega-x_n$. Let $B_1$ denote the unit ball centered at the origin and set $V_n:=\overline \Omega_n\cap B_1$.
Let also $R>0$ be the radius in the definition of the uniform $C^2$ regularity of the domain~\eqref{UniformlyC2}, and let $y_n\in\Omega_n$ be a sequence such that $0\in B_n:=\{\vert x-y_n\vert\leq R\}\subset\Omega_n$.
%
%
Without loss of generality, we can also assume that $R$ is so small that $\overline{B}_n\subset V_n$.

Let us derive uniform $C_{loc}^{2,\alpha}$ estimates on $\underline{u}_n$ and $\overline{u}_n$.
For $n$ large enough, we have that $w_n(0)\geq -1$, therefore $\inf_{V_n} u_n$ is bounded from above uniformly in $n$.
From the classical Harnack inequality, $\overline{u}$ satisfies
\begin{equation}\label{Harnack_Classical}
\sup_{V_n}\overline u_n\leq C \inf_{V_n}\overline u_n.
\end{equation}
with a constant $C>0$ independent of $n$. This implies that $\sup_{V_n}\overline u_n$ is bounded uniformly in $n$. Then, classical Schauder estimates imply that $\overline{u}_n$, $\underline{u}_n$, and $w_n$ are bounded in $C^{2,\alpha}(V_n)$, uniformly in $n$.

The uniform $C^{2,\alpha}$ imply by compactness that $w_n$ converges (up to a subsequence) to some $w_\infty$ in $C^2(\overline{B}_\infty)$, where $B_\infty$ is a ball of radius $R$ and $0\in\overline{B}_\infty$. Similarily, the coefficients $\mathcal{L}_n:=\mathcal{L}(\cdot+x_n)$, $\mathcal{B}_n=\mathcal{B}(\cdot+x_n)$ converge in $C^{0,\alpha}$ (up to a subsequence) to some $\mathcal{L}_\infty$, $\mathcal{B}_\infty$.
We further have
\begin{equation}\label{Ineq_w_infty}
w_\infty\leq0=w_\infty(0),
\end{equation}
 and
\begin{equation}\label{EigenEqAsymptotic}
\left\{\begin{aligned}
&\mathcal{L}_\infty w_\infty \leq -t_0\lambda \overline{u}_\infty &&\text{in }B_\infty,\\
&\mathcal{B}_\infty w_\infty\leq 0 &&\text{in }\D\Omega\cap \overline{B_\infty}.
\end{aligned}\right.
\end{equation}

Let us show that $w_\infty\equiv0$. On the one hand, if $0\in B_\infty$ then the classical Strong Maximum Principle implies that $w_\infty\equiv0$. On the other hand, if $0\in\D\Omega\cap B_\infty$ then the boundary condition in~\eqref{EigenEqAsymptotic} yields 
\begin{equation}
\nu(0)\cdot A_\infty(0)\nabla w_\infty(0)\leq -\gamma_\infty(0)w_\infty(0)= 0.
\end{equation}
From Hopf's lemma, we deduce $w_\infty\equiv0$. In both cases, we have derived that $w_\infty\equiv0$.

Using~\eqref{EigenEqAsymptotic}, that $t_0>0$, and that $\inf\overline{u}_\infty>0$, we deduce $\lambda=0$: contradiction. Thus $t_0=0$, $\underline{u}=v_+=0$, and so $v\leq0$.
\end{proof}

\begin{proof}[Proof of \autoref{th:LemmaEigenNonHomogene}]
The proof is inspired by the proof of~\cite[Proposition~5.2]{Berestycki2015b}.
We use the notations
\begin{equation}
f(x,s):= \vert c(x)\vert\overline{\theta}(s)+\lambda s\quad ; \quad g(x,s):=\vert \gamma(x)\vert \overline{\theta}(s).
\end{equation}
Our goal is to show the existence of a solution $\underline{u}$ of the equation
\begin{equation}\label{equation_underline_u}
\left\{\begin{aligned}
&\mathcal{L}\underline{u}= f(x,\underline{u})&&\Omega,\\
&\mathcal{B}\underline{u}=g(x,\underline{u}) &&\D\Omega.
\end{aligned}\right.
\end{equation}
which also satisfies $2\leq \overline{u}\leq \varphi$. Our arguments can be adapted without difficulty to prove the existence of $\underline{u}$ satisfying the required conditions.

From the assumptions~\eqref{property_theta_overline} on $\overline{\theta}$ and that $\inf_\Omega\varphi\geq3$, we have that
\begin{equation}\label{SupEquation_Phi}
\left\{\begin{aligned}
&\mathcal{L}\varphi\geq f(x,\varphi)&&\Omega,\\
&\mathcal{B}\varphi\geq g(x,\varphi) &&\D\Omega.
\end{aligned}\right.
\end{equation}
Setting $\sigma\equiv 2$, we also have that
\begin{equation}\label{SubEquation_Sigma}
\left\{\begin{aligned}
&\mathcal{L}\sigma\leq f(x,\sigma)&&\Omega,\\
&\mathcal{B} \sigma\leq g(x,\sigma) &&\D\Omega.
\end{aligned}\right.
\end{equation}
We are going to use $\varphi$ and $\sigma$ as a super and a subsolution of~\eqref{equation_underline_u}, and construct $\overline u$ with Perron's iterative method on a truncated domain. Let us consider a sequence $0< R_j\to+\infty$ of real positive numbers and an increasing sequence of bounded Lipschitz subdomains $\Omega^{R_j}\subset\Omega$ such that $\bigcup\limits_{j>0}\Omega^{R_j}=\Omega$. We denote $\Sigma^{R_j}=\D\Omega^{R_j}\cap\D\Omega$, and we choose $\Omega_R$ such that the intersection of $\Sigma_R$ and $\D\Omega_R\backslash\Sigma_R$ is a $C^2$ $(n-2)$-dimensional manifold.

We set $u_0=\varphi$ and define by induction $u_{n+1}$ as the unique solution of
\begin{equation}\label{MixedBoundary_recu}
\left\{\begin{aligned}
&\mathcal{L} u_{n+1}-C u_{n+1}= f(x,u_n)- C u_n&&\Omega_R,\\
&\mathcal{B} u_{n+1}-\Gamma u_{n+1}= g(x,u_n)-\Gamma u_{n} &&\Sigma_R,\\
& u_{n+1}=\varphi &&\D\Omega_R\backslash\Sigma_R
\end{aligned}\right.
\end{equation}
with $C:=\inf_\Omega c$ and $\Gamma:=\inf_{\D\Omega}\gamma$. 
From the results of Liberman~\cite{Lieberman1986}, we know that all classical results (Schauder estimates, Maximum Principle, solvability, etc.) hold from the mixed boundary value problem~\eqref{MixedBoundary_recu}. First, those results imply that the sequence $u_n$ is well defined. Then, from the Maximum Principle, we can show by induction that 
\begin{equation}\label{inegalite_recurrence}
\sigma\leq u_{n+1}\leq u_n\leq \varphi.
\end{equation}

From the a priori Schauder estimates proved in~\cite{Lieberman1986} for the mixed boundary value problem~\eqref{MixedBoundary_recu}, we know that $u_n$ is bounded in $C^{2,\alpha}$ uniformly in $n$, and therefore converges in $C^2(\overline \Omega_R )$ to some function $u^R$ which is a solution of
\begin{equation}\label{EquationUR}
\left\{\begin{aligned}
&\mathcal{L} u^R= f(x,u^R)&&\Omega_R,\\
&\mathcal{B} u^R= g(x,u^R)&&\Sigma_R,\\
& u^R=\varphi &&\D\Omega_R\backslash\Sigma_R
\end{aligned}\right.
\end{equation}
We also know from~\eqref{inegalite_recurrence} that
\begin{equation}
\sigma\leq u^R\leq \varphi.
\end{equation}

From Theorem 3.3 in~\cite{Lieberman1987} and Theorem 4.3 in~\cite{Lieberman2001}, we can show that the Harnack estimate holds for the mixed boundary problem~\eqref{EquationUR}, namely, we have that 
\begin{equation}
\sup_{\overline\Omega_{R_0}} u^R\leq C\inf_{\overline\Omega_{R_0}} u^R\leq C\inf_{\overline \Omega_{R_0}} \varphi,
\end{equation}
where $C$ is a constant independent of $R$.

Now, from classical Schauder estimates, $u^R$ is uniformly $C^{2,\alpha}$ in $\overline{\Omega}_{R_0}$. Thus, $u^R$ converges (up to extraction) to some $\overline u$ in $C_{loc}^2(\overline{\Omega})$ when $R\to+\infty$, which satisfies the required conditions.

Let us now derive an estimate on $u^R$ which is uniform in $R$. We fix $R_0>0$ and take $R>R_0$. 
\end{proof}

\paragraph*{Aknowledgments.} The author is deeply thankful to Professor Luca Rossi and to Professor Yehuda Pinchover for very useful comments and discussions.

\bibliographystyle{plain}
\bibliography{/Users/samuelnordmann/Dropbox/Etudes/Bibliographie/library}
\end{document}